\font \eightrm=cmr8
\newcommand{\nc}{\newcommand}
\nc\smsc{0.8}
\def\oprec{\!\!\joinrel{\ocircle\hskip -12.5pt \prec}\,}
\def\soprec{\,\joinrel{\ocircle\hskip -6.7pt \prec}\,}
\nc\delete[1]{}
\nc{\mlabel}[1]{\label{#1}}  
\nc{\mcite}[1]{\cite{#1}}  
\nc{\mref}[1]{\ref{#1}}  
\nc{\mbibitem}[1]{\bibitem{#1}} 
\nc{\mlabel}[1]{\label{#1}  
{\hfill \hspace{1cm}{\small\tt{{\ }\hfill(#1)}}}}
\nc{\mcite}[1]{\cite{#1}{\small{\tt{{\ }(#1)}}}}  
\nc{\mref}[1]{\ref{#1}{{\tt{{\ }(#1)}}}}  
\nc{\mbibitem}[1]{\bibitem[\bf #1]{#1}} 
\nc{\mop}[1]{\mathop{\hbox {\rm #1} }}
\nc{\smop}[1]{\mathop{\hbox {\eightrm #1} }}
\nc{\mopl}[1]{\mathop{\hbox {\rm #1} }\limits}
\nc{\smopl}[1]{\mathop{\hbox {\eightrm #1} }\limits}
\def \restr#1{\mathstrut_{\textstyle |}\raise-8pt\hbox{$\scriptstyle #1$}}
\def \srestr#1{\mathstrut_{\scriptstyle |}\hbox to
  -1.5pt{}\raise-4pt\hbox{$\scriptscriptstyle #1$}}
\nc{\wt}{\widetilde}
\nc{\wh}{\widehat}
\newtheorem{theorem}{Theorem}[section]
\newtheorem{definition}{Definition}[section]
\newtheorem{corollary}{Corollary}[section]
\newtheorem{proposition}{Proposition}[section]
\newtheorem{lemma}{Lemma}[section]
\newtheorem{remark}{Remark}[section]
\newtheorem{examples}{Examples}[section]
\numberwithin{equation}{section}
\newcommand\alphlist{a,b,c,d,e,f,g,h,i,j,k,l,m,n,o,p,q,r,s,t,u,v,w,x,y,z}
\newcommand\Alphlist{A,B,C,D,E,F,G,H,I,J,K,L,M,N,O,P,Q,R,S,T,U,V,W,X,Y,Z}
\newcommand\getcmds[3]{\expandafter\newcommand\csname #2#1\endcsname{#3{#1}}}
\alphlist\do{\expandafter\getcmds\expandafter{\x}{frak}{\mathfrak}}
\Alphlist\do{\expandafter\getcmds\expandafter{\x}{frak}{\mathfrak}}
\nc{\bfk}{{\bf k}}
\nc{\sha}{\shuffle}
\nc{\id}{\mathrm{id}}
\nc{\Id}{\mathrm{Id}}
\nc{\lbar}[1]{\overline{#1}}
\nc{\ot}{\otimes}
\nc{\dep}{\mathrm{dep}}
\nc{\ver}{\mathrm{ver}}
\nc{\tred}[1]{\textcolor{red}{#1}} \nc{\tgreen}[1]{\textcolor{green}{#1}}
\nc{\tblue}[1]{\textcolor{blue}{#1}} \nc{\tpurple}[1]{\textcolor{purple}{#1}}
\nc{\tcyan}[1]{\textcolor{cyan}{#1}} 
\nc{\tblk}[1]{\textcolor{black}{#1}}
\nc{\li}[1]{\tpurple{\underline{Li:}#1 }}
\nc{\liadd}[1]{\tpurple{#1}}
\nc{\xing}[1]{\tblue{\underline{Xing:}#1 }}
\nc{\yuan}[1]{\tred{\underline{Yuan:}#1 }}
\nc{\markus}[1]{\tred{\underline{Markus:} #1}}
\nc{\dominique}[1]{\tpurple{\underline{Dominique: }#1 }}
\long\def\ignore#1{}
\tikzset{
baseon/.style={baseline={($(#1)+(0,-0.58ex)$)}},
baseon/.default=current bounding box.center,
every picture/.style=baseon,
lst/.style={},
dst/.style={circle,inner sep=1pt,outer sep=0pt,fill,draw,dst2},
dst2/.style={fill=white},
ddst/.style={diamond,draw,inner sep=1pt},
eest/.style={ellipse,draw,inner sep=1pt,minimum size=2ex},
}
\def\zzz#1`#2...#3`#4...#5`#6@{%
--++(#1)
node[dst,label={#5:$#6$},name=#2]{}
node[midway,\hbox{Aut}o,#3]{$#4$}
}
\def\ddd#1`#2`#3@{+(#1)node[ddst,name=#2]{$#3$}}
\def\eee#1`#2`#3@{+(#1)node[eest,name=#2]{$#3$}}
\def\xxx#1`#2@{node[midway,\hbox{Aut}o,inner sep=1pt,#1]{$#2$}}
\def\pp#1`#2`#3@{node[dst,label={#2:$#3$},pos=#1]{}}
\def\oo#1`#2`#3@{\path (o) node[dst,label={#2:$#3$},name=o,#1]{};}
\def\eoo#1`#2@{\node[eest,name=o,#1] at (o) {$#2$};}
\newif\ifshowjdq
\newcommand\setXXclip[3]{%
\def\XXheight{#1}\def\XXdepth{#2}\def\XXwidth{#3}}
\newcommand\simra{\mathrel{\mathpalette\@verra\sim}}
\def\@verra#1#2{\lower.5\p@\vbox{\lineskiplimit\maxdimen \lineskip-.5\p@
\ialign{$\m@th#1\hfil##\hfil$\crcr#2\crcr\rightarrow\crcr}}}
\nc{\dnx}{\Delta_n A} \nc{\dx}{\Delta A} \nc{\dgp}{{\rm deg_{P}}}
\nc{\dgt}{{\rm deg_{T}}} \nc{\dg}{{\rm deg}} \nc{\ida}{ID($A$)} \nc{\tu}{\tilde{u}} \nc{\tv}{\tilde{v}}
\nc{\nr}{\calr_n} \nc{\nz}{\calz_n} \nc{\fun}{\cala_{n,d}}
 \nc{\fbase}{\calb} \nc{\LF}{\mathrm{RF}} \nc{\FFA}{\mathrm{LF}} \nc{\irr}{\mathrm{Irr}}
 \nc{\result}{\bfk\mathrm{Irr}(S_n)}  \nc{\I}{I_{\mathrm{ID},n}^0}
 \nc{\nrs}{\calr_n^\star} \nc{\ii}{\mathrm{I}} \nc{\iii}{\mathrm{II}}
\nc{\intl}{{\rm int}}\nc{\ws}[1]{{#1}}\nc{\deleted}[1]{\delete{#1}}\nc{\plas}{placements\xspace}
\nc{\bim}[1]{#1}  \nc{\shaop}{\sha_{\Omega}^{+}}  \nc{\shao}{\sha_{\Omega}}
\nc{\bbim}[2]{#1 #2} \nc{\bbbim}[2]{#1,\, #2} \nc{\RBF}{{\rm RBF}}
\nc{\frb}{F_{\RB}} \nc{\shaf}{\ssha_{\tiny{\Omega}}} \nc{\sham}{\blacklozenge_{\tiny{\Omega}}}
\nc{\lf}{\lfloor} \nc{\rf}{\rfloor} \nc{\shan}{\ssha_{\lambda}}
\nc{\rlex}{{\rm {lex}}} \nc{\bb}{\Box} \nc{\ra}{\rightarrow}
\nc{\e}{{\rm {e}}}
\nc{\DDF}{\mathrm{DD}(X,\,\Omega)}\nc{\DTF}{\mathrm{DT}(X,\,\Omega)} \nc{\DT}{\mathrm{DT}'(\Omega,\,V)}
\nc{\bra}{\mathrm{bra}} \nc{\bre}{\mathrm{bre}}
\nc{\dec}{\mathrm{dec}} \nc{\blacklozengew}{\blacklozenge_{w}}
\nc{\type}{\mathrm{type}}
\nc\calt{\cal{T}(X,\,\Omega)} \nc\caltn{\cal{T}_n(X,\,\Omega)}
\nc\calta{\cal{T}_0(X,\,\Omega)}
\nc\caltb{\cal{T}_1(X,\,\Omega)}
\nc\caltc{\cal{T}_2(X,\,\Omega)}
\nc\caltd{\cal{T}_3(X,\,\Omega)}
\nc\caltm{\cal{T}_m(X,\,\Omega)}
\nc\caltx{\cal{T}(X)}
\nc\calf{\cal{F}(X,\,\Omega)}
\nc\fram{\frak{M}(\Omega,\, X)}
\nc\shaw{\sha^{NC}_w(\Omega,\, X)}
\nc\dw{\blacklozenge_w} \nc\dl{\blacklozenge_\ell}
\nc\shal{\sha^{NC}_\ell(X,\, \Omega)} \nc\shav{\sha^{NC}_w(\Omega,\, V)} \nc\shat{\sha^{NC,1}_w(\Omega,\, T^{+}(V))}
\nc{\cfo}{\cal{F}(X,\,\Omega)}
\nc{\sh}{\rm{Sh}}
\nc{\lar}{\varinjlim}
\def\cxo#1#2;{\cal{#1}#2\XO}
\nc\lrf[2]{B_{#2}^+(#1)}
\nc{\fd}{\mathrm{\text{typed angularly decorated planar rooted trees}}}
\nc{\rb}{\mathrm{RBFWs}} \nc{\dfw}{\mathrm{DFW{(X)}}} \nc{\tfw}{\mathrm{TFW{(X)}}}
\nc{\tfv}{\mathrm{TFW{(V)}}}
\def\Ve#1,#2,#3;{\vee_{#1,\,(#2,\,#3)}}
\def\bigv#1;#2;#3;{\bigvee\nolimits_{#1}^{#2;\,#3}}
\nc\rjt[2]{\mathrel{\mathop{\longrightarrow}\limits^{#1\hfill}_{\hfill#2}}}
\nc{\pl}{\cal{PLF}}
\nc{\tr}{\cal{RTF}}
\nc{\im}{\mathrm{Im}}
\nc{\ff}{\cal{F}_\Omega}
\nc{\tm}{T_\Omega}
\nc{\calp}{\cal{P}}
\nc\dd{\@ifnextchar'{\ddA}{\ddB}}
\def\ddA'#1;{\lhd'_{#1\,}}
\def\ddB#1;{\lhd_{#1\,}}
\nc{\pbt}{\mathrm{PBT}}
\nc{\ad}{\mathrm{ad}}
\begin{document}

\title[A Hopf algebra of finite topological quandles ]{A twisted Hopf algebra of finite topological quandles}
\thispagestyle{empty}
\author{Mohamed Ayadi}
\address{Laboratoire de Math\'ematiques Nicolas Oresme,
CNRS--Universit\'e de Caen Normandie,
Esp. de la Paix, 14000 Caen, France and University of Sfax, Faculty of Sciences of Sfax,
LAMHA, route de Soukra,
3038 Sfax, Tunisia.}
\email{mohamedayadi763763@gmail.com}
\author{Dominique Manchon}
\address{Laboratoire de Math\'ematiques Blaise Pascal,
CNRS--Universit\'e Clermont-Auvergne,
3 place Vasar\'ely, CS 60026,
F63178 Aubi\`ere, France}
\email{Dominique.Manchon@uca.fr}

\tikzset{
			stdNode/.style={rounded corners, draw, align=right},
			greenRed/.style={stdNode, top color=green, bottom color=red},
			blueRed/.style={stdNode, top color=blue, bottom color=red}
		} 
		
	\begin{abstract}
This paper describes some algebraic properties of the species of finite topological quandles. We construct two twisted bialgebra structures on this species, one of the first kind and one of the second kind. The obstruction for the structure to match the double twisted bialgebra axioms is explicitly described.
	\end{abstract}
	
\keywords{Quandles, Finite topological spaces, Species, Bialgebras.}
\subjclass[2020]{57K12, 16T05, 16T10, 16T30.}
\maketitle
\tableofcontents
	\section{Introduction}
A quandle is a set $Q$ with a binary operation $\lhd : Q \times Q \longrightarrow Q$ satisfying the three
axioms
\begin{itemize}
    \item (i) for every $a \in Q$, we have $a \lhd a = a$,
    \item (ii) for every pair $a, b \in Q$ there is a unique $c \in Q$ such that $a = c \lhd b$, and
    \item (iii) for every $a, b, c \in Q$, we have $(a \lhd b) \lhd c = (a \lhd c) \lhd (b \lhd c)$.
\end{itemize}
These three conditions that define a quandle originate from the axiomatization of the three reidemeister moves on knot diagrams. Quandles are algebraic structures that have various applications in knot theory and related fields. Two typical examples of quandles are the conjugation quandle and the core quandle. The conjugation quandle is derived from any group $(G, \circ)$, where the binary operation is given by conjugation, i.e. $x\lhd y=y^{-1}\circ x\circ y$. The core quandle, on the other hand, is another quandle derived from any group $(G, \circ)$, with the binary operation defined by $x\lhd y=x\circ y^{-1}\circ x$. Both of these quandles are of great importance in knot theory and have been studied extensively.  For more on quandles, see \cite{ Matveev, Joyce, Elh2, Elhamdadi, Yetter}.\\

By Alexandroff’s theorem \cite{acg.Alex, acg..12}, for any finite set $X$, there is a bijection between topologies on $X$ and quasi-orders on $X$, where a quasi-order $\le$ in $X$ is a reflexive and transitive relation, not necessarily antisymmetric. For any $x,y\in X$, we write $x\le_{\mathcal T} y$ whenever any $\mathcal T$-open subset containing $x$ also contains $y$, and we note $x\sim_{\mathcal T} y$ whenever both $x\le_{\mathcal T}y$ and $y\le_{\mathcal T}x$ hold. More on finite topological spaces can be found in \cite{Moh. twisted, Moh. Doubling, acg15, acg16}.\\

 Given two topologies $\mathcal{T}$ and $\mathcal{T}'$ on a finite set $X$, we say that $\mathcal{T}'$ is finer than $\mathcal{T}$, denoted by $\mathcal{T}'\prec \mathcal{T}$, if every open subset of $\mathcal{T}$ is also an open subset of $\mathcal{T}'$. This is equivalent to saying that for any $x,y\in X$, if $x\le_{\mathcal{T}'} y$, then $x\le_{\mathcal{T}} y$. The quotient $\mathcal{T}/\mathcal{T}'$ of these two topologies is defined as follows: the associated quasi-order relation, denoted by $\le_{\mathcal{T}/\mathcal{T}'}$, is the transitive closure of the relation $\mathcal{R}$, which is defined by $x\mathcal{R} y$ if and only if $x\leq_{\mathcal{T}} y$ or $y\leq_{\mathcal{T}'} x$. F. Fauvet, L. Foissy and D. Manchon in \cite{acg10} define a relation noted $\oprec$ on the set of topologies in $X$ as follows: $\mathcal{T}^{\prime}\oprec \mathcal{T}$ if and only if
\begin{itemize}
	\item $\mathcal{T}^{\prime}\prec \mathcal{T}$,
	\item $\mathcal{T}^{\prime}_{|Y}=\mathcal{T}_{|Y}$ for any subset $Y\subset X$ connected for the topology $\mathcal{T}^{\prime}$,\
	\item for any $x, y \in X$,
	\begin{equation*}
	 x \sim_{\mathcal{T}/ \mathcal{T}^{\prime}} y \iff x \sim_{\mathcal{T}^{\prime}/ \mathcal{T}^{\prime}} y.
	 \end{equation*}
\end{itemize}

Let $(Q, \le)$ be a topological space equipped with a continuous map $\mu : Q \times Q \longrightarrow Q$ , denoted by  $\mu(a, b) = a \lhd b$, such that for every $b\in Q$ the mapping $R_b:a\mapsto a \lhd b$ is a homeomorphism of $(Q,\le)$. The space $Q$ (together with the map $\mu$ ) is called a topological quandle \cite{Rubinsztein} if it satisfies for all $a, b, c \in Q$
\begin{itemize}
    \item (i) $(a \lhd b) \lhd c=(a \lhd c) \lhd (b \lhd c)$,
    \item (ii) $a \lhd a=a$.
\end{itemize}

A finite topological quandle is a topological quandle with a finite underlying set. The study of finite topological quandles is important because finite quandles arise naturally in the study of knots and links, and topological quandles provide a way to study the geometry of these structures \cite{CSK}.\\


The species formalism, due to A. Joyal \cite{J1981, J1986}, is an important tool in combinatorics. The idea of a species is to formalize the notion of "combinatorial equivalence" between objects of a given type, so that one can study the properties of the objects without worrying about their particular representations. To be precise, a linear species is a contravariant functor from the category of finite sets with bijections to the category of vector spaces over a given field $\mathbf{k}$. Specifically, a linear species $\mathbb{E}$ assigns to any finite set $X$ a vector space $\mathbb{E}_X$ over $\mathbf{k}$, and assigns to any bijection $\sigma:X\to Y$ a linear isomorphism $\mathbb E_\sigma:\mathbb E_Y\to \mathbb E_X$, such that the awaited functorial properties hold. One important operation on linear species is the Cauchy tensor product, denoted by $\otimes$, which takes two species $\mathbb{E}$ and $\mathbb{F}$ and produces a new species $\mathbb{E}\otimes \mathbb{F}$ 
defined by   
\begin{itemize}
\item $(\mathbb{E}\otimes \mathbb{F})_A=\bigoplus \limits_{\underset{}{I\subseteq A}}\mathbb{E}_I\otimes \mathbb{F}_{A\backslash I}$\index{$(\mathbb{E}\otimes \mathbb{F})_A=\bigoplus \limits_{\underset{}{I\subseteq A}}\mathbb{E}_I\otimes \mathbb{F}_{A\backslash I}$},
\item for any bijection $\sigma : B\longrightarrow A$, 
\[(\mathbb{E}\otimes \mathbb{F})(\sigma) : \begin{cases}
\bigoplus \limits_{\underset{}{J\subseteq B}}\mathbb{E}_J\otimes \mathbb{F}_{B\backslash J}\longrightarrow \bigoplus \limits_{\underset{}{I\subseteq B}}\mathbb{E}_I\otimes \mathbb{F}_{B\backslash I}\\
\hspace{1.28cm}x\otimes y \longmapsto \mathbb{E}(\sigma_{|I})(x)\otimes \mathbb{F}(\sigma_{|B \backslash I})(y).
\end{cases}\]
\end{itemize}
We also recall that, for any two linear species $\mathbb{E}$ and $\mathbb{F}$, their Hadamard tensor product is defined by \cite{acg11}:  
    \begin{itemize}
        \item $(\mathbb{E}\odot \mathbb{F})(A)=\mathbb{E}(A)\otimes \mathbb{F}(A)$\index{$(\mathbb{E}\odot \mathbb{F})(A)=\mathbb{E}(A)\otimes \mathbb{F}(A)$},
        \item for bijection $\sigma : B\longrightarrow A$, $(\mathbb{E}\odot \mathbb{F})(\sigma)=\mathbb{E}(\sigma)\otimes \mathbb{F}(\sigma)$.
    \end{itemize}


The species $\mathbb{QT}$ of finite topological quandles describes finite topological quandles up to combinatorial equivalence. Specifically, the species $\mathbb{QT}$ is a contravariant functor from the category of finite sets with bijections to the category of vector spaces, which associates to each finite set S the linear span of all finite topological quandles with underlying set S, i.e.,
the species $\mathbb{QT}$ is defined by:
\begin{itemize}
        \item for any finite set
$A$, $\mathbb{QT}_A$ is the vector space freely generated by the topological quandle stuctures on $A$, i.e., $\mathbb{QT}_A= \hbox{span}(A, \lhd,\le)$,  where  $(A, \lhd)$  is a quandle and  $\le$  is a quasi-order compatible with $(A, \lhd)$,
        \item for any bijection $\sigma: B\longrightarrow A$, $\mathbb{Q}_\sigma$ sends the topological quandle $Q=(A, \lhd, \le)$ to the topological quandle $\mathbb{Q}_\sigma (Q)=(B, \blacktriangleleft, \le')$, where $\blacktriangleleft$ and $\le'$ are defined by relabeling.
    \end{itemize}
${}$\\

The present article is organized as follows: in Section \ref{matrix of a finite quandle}, we revisit some important results related to finite quandles. Specifically, we remind the reader of the method developed by B. Ho and S. Nelson in \cite{B. Ho and S. Nelson} to describe finite quandles with at most 5 elements. Section \ref{3} contains our main results: we construct an external coproduct $\Delta$ defined for all $(X, \mathcal{T} ,\lhd)\in \mathbb{QT}_X$ (where $X$ is a finite set) by:
      	\begin{align*}
	\Delta:\mathbb{QT}_X&\longrightarrow (\mathbb{QT} \otimes \mathbb{QT})_X=\bigoplus \limits_{Z\subset X}\mathbb{QT}_{Z}\otimes\mathbb{QT}_{X\backslash Z}\\  	
	(X, \mathcal{T},\lhd)&\longmapsto \sum \limits_{Y \hbox{\tiny{ subquandle of X }} }(Y, \mathcal{T}_{|Y}, \lhd)\otimes (X\backslash Y, \mathcal{T}_{|X\backslash Y}, \lhd^{X, Y}),
	\end{align*}
 with an explicit quandle structure $\lhd^{X, Y}$ on the complement $X\backslash Y$. We moreover define for any finite set $X$ an internal coproduct $\Gamma: \mathbb{QT}_X\longrightarrow (\mathbb{QT}\odot \mathbb{QT})_X=\mathbb{QT}_X\otimes \mathbb{QT}_X$ by, for all $(X, \mathcal{T}, \lhd) \in \mathbb{QT}_X$:
$$\Gamma(X, \mathcal{T}, \lhd)=\sum \limits_{\underset{\mathcal{T}^{\prime} \hbox{ \tiny{is a Q-compatible}}}{\mathcal{T}^{\prime}\soprec  \mathcal{T} }}(X, \mathcal{T}^{\prime}, \lhd)\otimes (X, \mathcal{T}/ \mathcal{T}^{\prime}, \lhd).$$
 
 It indeed turns out that the quandle structure is compatible with both topologies $\mathcal T'$ and $\mathcal T/\mathcal T'$. The associative product $m$ of two topological quandles structures on $X$ and $Y$ respectively is given by the disjoint union of the topological spaces and the quandle structures involved: the action of elements of $X$ on $Y$ (and vice-versa) is trivial.
We prove that $(\mathbb{QT}, m, \Delta)$ is a commutative connected twisted bialgebra and $(\mathbb{QT}, m, \Gamma)$ is a commutative connected twisted bialgebra on the second kind \cite{acg11}. 
Finally, we define a map
	$$\xi : \mathbb{QT}_X \otimes (\mathbb{QT}\otimes \mathbb{QT})_X \longrightarrow \mathbb{QT}_X \otimes (\mathbb{QT}\otimes \mathbb{QT})_X$$
by:
\begin{equation*}\label{eq:xi}
	 \xi\big((X, \mathcal{T}, \lhd)\otimes (Y, \mathcal{T}_1, \lhd_1)\otimes (X\backslash Y, \mathcal{T}_2, \lhd_2)  \big)=(X, \mathcal{T}, \widetilde{\lhd} )\otimes (Y, \mathcal{T}_1,\lhd_1)\otimes (X\backslash Y, \mathcal{T}_2, \lhd_2)
	 \end{equation*}

where the new quandle structure $\widetilde\lhd$ is explicitly given, such that the coproduct $\Gamma $ and the map $\xi $ make the following diagram commute:
 $$
\xymatrix{
\mathbb{QT}_X \ar[rr]^\Gamma \ar[d]_{\Delta} && \mathbb{QT}_X \otimes \mathbb{QT}_X \ar[d]^{Id \otimes \Delta}\\
	(\mathbb{QT}\otimes\mathbb{QT})_X \ar[d]_{\Gamma \otimes \Gamma } && \mathbb{QT}_X \otimes (\mathbb{QT} \otimes \mathbb{QT})_X \ar[d]^{\xi}\\
\bigoplus \limits_{Y\subset X}\mathbb{QT}_Y \otimes \mathbb{QT}_Y \otimes \mathbb{QT}_{X\backslash Y} \otimes \mathbb{QT}_{X\backslash Y} \ar[rr]_{ m^{1,3} } && \mathbb{QT}_X \otimes (\mathbb{QT} \otimes \mathbb{QT})_X
}
$$
i.e.,
$$\xi \circ(Id\otimes \Delta )\circ \Gamma =m^{1,3} \circ(\Gamma \otimes \Gamma )\circ \Delta.$$
In other words, $\mathbb{QT}$ is nearly a twisted double bialgebra in the sense of \cite{acg11}, the defect being precisely described by the map $\xi$.
\section{Review of finite quandles}\label{matrix of a finite quandle}

Let $Q=\{x_1, x_2, . . . , x_n\}$ be a finite quandle with $n$ elements. B. Ho and S. Nelson in \cite{B. Ho and S. Nelson} defined the
matrix of $Q$, denoted $M_Q$, to be the matrix whose entry in row $i$ column $j$ is $x_i \lhd x_j$:\\
$$M_Q=	\begin{bmatrix}
x_1\lhd x_1 & x_1\lhd x_2 &...& x_1\lhd x_n\\
x_2\lhd x_1 & x_2\lhd x_2 &...& x_2\lhd x_n\\
.&.&...&.\\
.&.&...&.\\
.&.&...&.\\
x_n\lhd x_1 & x_n\lhd x_2 &...& x_n\lhd x_n
\end{bmatrix}
$$
\begin{examples}\cite{B. Ho and S. Nelson}
    For $Q=\{a, b, c \}$, the quandle matrices for quandles of order 3 are, up to permutations of the underlying three-element set:
   $$\begin{bmatrix}
a & a & a\\
b & b & b\\
c & c & c
\end{bmatrix}
, \hspace{1cm}
\begin{bmatrix}
a & c & b\\
c & b & a\\
b & a & c
\end{bmatrix}
, \hspace{1cm}
\begin{bmatrix}
a & a & a\\
c & b & b\\
b & c & c
\end{bmatrix}
$$ 
\end{examples}
\begin{definition}
Let $Q$ be a quandle. A subquandle $X \subset Q$ is a subset of $Q$ which is itself a quandle under $\lhd$. Let $Q$ be a quandle and $X \subset Q$ a subquandle. We say that $X$ is complemented in $Q$ or $Q$-complemented if $Q\backslash X$ is a subquandle of $Q$. 
\end{definition}
\noindent \textbf{Notation.} Let $(Q, \lhd)$ be a finite quandle, for $x'\in Q$, we note 

\begin{equation*}
			   \begin{split}
			         R_{x'}:Q &\longrightarrow Q\\
		              x &\longmapsto x\lhd x',
			    \end{split}
			     \ \ \ \ \ \ \ \ \ \ \ \ \ \ \ \hbox{and} \ \ \ \ \ \ \ \ \ \ \ \ \ \ \begin{split}
			         L_{x'}:Q& \longrightarrow Q\\
		         x& \longmapsto x'\lhd x.
			    \end{split}
			\end{equation*}
   	\begin{remark}\rm
	For any finite quandle $Q$ the following statements are equivalent:
	\begin{itemize}
	\item $(Q, \mathcal{T})$ is a topological quandle,
	\item $R_{x}$ is a homeomorphism and $L_{x}$ is a continuous map for any $x\in Q$,
	\item  for all $x, y, x', y' \in X$, if $x\le x'$ and $y\le y'$, we have $x\lhd y\le x'\lhd y'$.
	\end{itemize}           
	\end{remark}


\section{Algebraic structure of the linear species of finite topological quandles}\label{3}
Let $Q=(X, \lhd)$ be a finite quandle, and $Y$ be a subquandle of $Q$. Let
$$\lhd^{X, Y}:X\backslash Y\times X\backslash Y\longrightarrow X\backslash Y$$
be defined by $a\lhd^{X, Y} b= R_b^{\alpha (b)}(a)$, where $\alpha (b)=\hbox{inf}\{\alpha, R_{b|Y}^{\alpha}=Id_{|Y}\}$.

\begin{proposition}\label{compl-quandle}
Let $Q=(X, \lhd)$ be a finite quandle. For any subquandle $Y$ of $Q$, the pair $(X\backslash Y, \lhd^{X, Y})$ is a quandle.
\end{proposition}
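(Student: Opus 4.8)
The plan is to verify in turn that $\lhd^{X,Y}$ is a well-defined operation on $X\backslash Y$ and that it satisfies the three quandle axioms, the self-distributivity being the only nontrivial point.

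First I would justify well-definedness. By axiom (ii) each $R_b$ is a bijection of the finite set $X$, so some power of $R_b$ equals $\Id$; hence $\alpha(b)$ is a well-defined positive integer, namely the least exponent for which $R_b^{\alpha(b)}$ fixes every element of $Y$ (concretely the $\mathrm{lcm}$ of the $R_b$-cycle lengths of the elements of $Y$). A bijection of $X$ fixing $Y$ pointwise must permute the complement, so $R_b^{\alpha(b)}(a)\in X\backslash Y$ whenever $a\in X\backslash Y$, and $\lhd^{X,Y}$ does land in $X\backslash Y$. Axiom (i) is then immediate: $R_a(a)=a\lhd a=a$ forces $R_a^{\alpha(a)}(a)=a$, i.e. $a\lhd^{X,Y}a=a$. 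Axiom (ii) for the new operation is equally direct, since $c\mapsto c\lhd^{X,Y}b=R_b^{\alpha(b)}(c)$ is the restriction to $X\backslash Y$ of the bijection $R_b^{\alpha(b)}$, hence invertible.

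The substance lies in self-distributivity. The tool I would use is the standard reformulation of axiom (iii) of $Q$ as the conjugation identity $R_z\circ R_y=R_{R_z(y)}\circ R_z$, equivalently $R_z R_y R_z^{-1}=R_{R_z(y)}$; an easy induction on $m$ gives $R_z^m R_y R_z^{-m}=R_{R_z^m(y)}$, and since conjugation is an automorphism, $R_z^m R_y^k R_z^{-m}=R_{R_z^m(y)}^k$ for every $k$. Writing $d:=b\lhd^{X,Y}c=R_c^{\alpha(c)}(b)$ and taking $z=c$, $m=\alpha(c)$, this yields the central formula $R_d^k=R_c^{\alpha(c)}R_b^k R_c^{-\alpha(c)}$. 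The step I expect to be the main obstacle is the exponent comparison $\alpha(d)=\alpha(b)$: using that $R_c^{\alpha(c)}$ fixes $Y$ pointwise, for $y\in Y$ the central formula gives $R_d^k(y)=R_c^{\alpha(c)}\big(R_b^k(y)\big)$, and applying $R_c^{-\alpha(c)}$ shows $R_d^k(y)=y$ iff $R_b^k(y)=y$; as this holds for each $y\in Y$, the two sets of admissible exponents coincide, so $\alpha(d)=\alpha(b)$. One must be careful here that $R_b$ need not preserve $Y$ when $b\notin Y$, so $\alpha(b)$ is to be read pointwise as above rather than as the order of a permutation of $Y$.

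With this in hand axiom (iii) closes immediately. The left-hand side is $(a\lhd^{X,Y}b)\lhd^{X,Y}c=R_c^{\alpha(c)}R_b^{\alpha(b)}(a)$, while the right-hand side is $(a\lhd^{X,Y}c)\lhd^{X,Y}(b\lhd^{X,Y}c)=R_d^{\alpha(d)}R_c^{\alpha(c)}(a)$. Substituting $\alpha(d)=\alpha(b)$ and then $R_d^{\alpha(b)}=R_c^{\alpha(c)}R_b^{\alpha(b)}R_c^{-\alpha(c)}$ collapses the right-hand side to $R_c^{\alpha(c)}R_b^{\alpha(b)}(a)$, which equals the left-hand side. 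This establishes self-distributivity and completes the proof that $(X\backslash Y,\lhd^{X,Y})$ is a quandle.
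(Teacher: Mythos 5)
Your proof is correct and follows essentially the same route as the paper's: both derive the conjugation identity $R_c^{n}R_b^{m}R_c^{-n}=R^{m}_{R_c^{n}(b)}$ from self-distributivity, use it to show $\alpha\big(R_c^{\alpha(c)}(b)\big)=\alpha(b)$, and then collapse the right-hand side of axiom (iii). Your explicit justification that $\alpha(b)$ is well defined and must be read pointwise on $Y$ (since $R_b$ need not preserve $Y$) is a welcome precision that the paper leaves implicit.
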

\begin{proof}
It is clear that, for all $a\in X\backslash Y$, $a\lhd^{X, Y}a=a$. Moreover for all $c \in X\backslash Y$, $R_c: X\times X \to X$ is a bijection, hence so is $R^{\alpha(c)}_c$.  Since $R^{\alpha(c)}_{c|Y}=Id_{|Y}$, we get that $R^{\alpha(c)}_c: X\backslash Y\times X\backslash Y \to X\backslash Y$ is a bijection as well.\\

For all $a, b, c\in X\backslash Y$, $(a\lhd^{X, Y}c)\lhd^{X, Y}(b\lhd^{X, Y}c)= R^{\alpha(c)}_c (a)\lhd^{X, Y} R^{\alpha(c)}_c (b)=R^{\alpha(R^{\alpha(c)}_c (b))}_{R^{\alpha(c)}_c (b)}\circ R^{\alpha(c)}_c (a)$ and $(a\lhd^{X, Y}b)\lhd^{X, Y} c=R^{\alpha(c)}_c \circ R^{\alpha(b)}_b(a)$. Using $(a\lhd b)\lhd c=(a\lhd c)\lhd (b\lhd c)$, i.e., $R_c \circ R_b (a)=R_{R_c(b)}\circ R_c(a)$, then for all $n \in \mathbb{N}$ we have, 
\begin{align*}
	R^n_c \circ R_b&=R^{n-1}_c\circ R_c\circ R_b\\
	&=R^{n-1}_c\circ R_{R_c(b)}\circ R_c\\
        &=R^{n-2}_c\circ R_c \circ R_{R_c(b)}\circ R_c\\
        &=R^{n-2}_c\circ R_{R^2_c(b)}\circ R^2_c\\
        &\hspace{0.2cm}.\\
        &\hspace{0.2cm}.\\
        &\hspace{0.2cm}.\\
        &=R_c\circ R_{R^{n-1}_c(b)}\circ R^{n-1}_c\\
        &=R_{R^{n}_c(b)}\circ R^{n}_c,
	\end{align*}
 and for all $m \in \mathbb{N}$ we have,
 \begin{align*}
	R_c \circ R^m_b&=R_c\circ R_b\circ R^{m-1}_b\\
	&= R_{R_c(b)}\circ R_c\circ  R_b\circ R^{m-2}_b\\
        &= R_{R_c(b)}\circ R_{R_c(b)}\circ  R_c\circ R^{m-2}_b\\
        &= R^2_{R_c(b)}\circ R_c\circ R^{m-2}_b\\
        &\hspace{0.2cm}.\\
        &\hspace{0.2cm}.\\
        &\hspace{0.2cm}.\\
        &= R^{m-1}_{R_c(b)}\circ R_c\circ R_b.\\
         &= R^{m}_{R_c(b)}\circ R_c.\\
	\end{align*}
Henc,  for all $n, m \in \mathbb{N}$ we have
$$
	R^n_c \circ R^m_b=R^m_{R^n_c(b)}\circ R^n_c.
$$
 Then $R^m_{R^n_c(b)}=R^n_c \circ R^m_b\circ R^{-n}_c$. So, $(R^m_{R^n_c(b)})_{|Y}=\hbox{Id}_{Y}$ if and only if $(R^m_b)_{|Y}=\hbox{Id}_{Y}$. Hence $\alpha(b)=\alpha\big(R^{\alpha(c)}_c(b)\big)$. We therefore get
$$R^{\alpha(R^{\alpha(c)}_c (b))}_{R^{\alpha(c)}_c (b)}\circ R^{\alpha(c)}_c (a)=R^{\alpha(c)}_c \circ R^{\alpha(b)}_b(a).$$
Then, for all $a, b, c\in X\backslash Y$, $(a\lhd^{X, Y}b)\lhd^{X, Y} c=(a\lhd^{X, Y}c)\lhd^{X, Y}(b\lhd^{X, Y}c)$, which proves Proposition \ref{compl-quandle}.
\end{proof}

Let $X$ be any finite set and $\mathbb{Q}_X= \hbox{span}(X, \lhd)$ the vector space of quandles in $X$. We define the external coproduct $\Delta$ by:
	\begin{align}\label{coprod-ext}
	\Delta:\mathbb{Q}_X&\longrightarrow (\mathbb{Q} \otimes \mathbb{Q})_X=\bigoplus \limits_{Z\subset X}\mathbb{Q}_{Z}\otimes\mathbb{Q}_{X\backslash Z}\notag\\  	
	(X,\lhd)&\longmapsto \sum \limits_{Y \hbox{\tiny{ subquandle of X }} }(Y, \lhd)\otimes (X\backslash Y, \lhd^{X, Y}),
	\end{align}
 and we define an associative product $m$ in $\mathbb{Q}$ by $m : \mathbb{Q}_{X_1} \otimes \mathbb{Q}_{X_2} \longrightarrow \mathbb{Q}_{X_1 \sqcup X_2}$, defined for all $Q_1=(X_1, \lhd_1)\in \mathbb{Q}_{X_1}$, $Q_2=(X_2, \lhd_2)\in \mathbb{Q}_{X_2}$, $m(Q_1\otimes Q_2)=(X_1\sqcup X_2, \widetilde{\lhd})$, where 
 \begin{itemize}
\item $a\widetilde{\lhd} b=a\lhd_1 b$, \hbox{ for all } $a, b\in X_1$,
\item $a\widetilde{\lhd} b=a\lhd_2 b$, \hbox{ for all } $a, b\in X_2$,
\item $a\widetilde{\lhd} b=b$, \hbox{ for all } $a\in X_1, b\in X_2$,
\item $a\widetilde{\lhd} b=a$, \hbox{ for all } $a\in X_2, b\in X_1$.
\end{itemize}
\begin{examples}
$$
m\left(
\begin{bmatrix}
c & c & c\\
e & d & d\\
d & e & e
\end{bmatrix} \otimes \begin{bmatrix}
a & b \\
b & b 
\end{bmatrix}\right)= \begin{bmatrix}
c & c & c & c & c\\
e & d & d & d & d\\
d & e & e & e & e\\
a & a & e & a & b\\
b & b & e & b & b 
\end{bmatrix},\hspace{1cm}
m\left(
 \begin{bmatrix}
a & b \\
a & b 
\end{bmatrix}\otimes \begin{bmatrix}
c & e & d\\
e & d & c\\
d & c & e
\end{bmatrix}\right)= \begin{bmatrix}
a & b & a & a & a\\
a & b & b & b & b\\
c & c & c & e & d\\
d & d & e & d & c\\
e & e & d & c & e 
\end{bmatrix}  
$$
$$\Delta\left(\begin{bmatrix}
a & a & a\\
c & b & b\\
b & c & c
\end{bmatrix}\right)= \begin{bmatrix}
b
\end{bmatrix}\otimes \begin{bmatrix}
a & a \\
c & c 
\end{bmatrix}+ \begin{bmatrix}
a
\end{bmatrix}\otimes \begin{bmatrix}
b & b \\
c & c 
\end{bmatrix} +\begin{bmatrix}
c
\end{bmatrix}\otimes \begin{bmatrix}
a & a \\
b & b 
\end{bmatrix}+\begin{bmatrix}
b & b \\
c & c 
\end{bmatrix}\otimes \begin{bmatrix}
a
\end{bmatrix}$$
$$\Delta\left(\begin{bmatrix}
a & c & b\\
c & b & a\\
b & a & c
\end{bmatrix}\right)= \begin{bmatrix}
b
\end{bmatrix}\otimes \begin{bmatrix}
a & a \\
c & c 
\end{bmatrix}+ \begin{bmatrix}
a
\end{bmatrix}\otimes \begin{bmatrix}
b & b \\
c & c 
\end{bmatrix} +\begin{bmatrix}
c
\end{bmatrix}\otimes \begin{bmatrix}
a & a \\
b & b 
\end{bmatrix}$$
\end{examples}
 \begin{theorem}
	$(\mathbb{Q}, m, \Delta)$ is a commutative connected twisted bialgebra.
 \end{theorem}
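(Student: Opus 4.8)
The plan is to verify in turn the monoid axioms for $m$, the comonoid axioms for $\Delta$, their bialgebra compatibility, and connectedness, the overwhelming bulk of the work being the coassociativity of $\Delta$.

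The algebra side is essentially formal. Since $m$ glues the two underlying sets by disjoint union, keeps each quandle operation on its own block, and uses the trivial cross-actions $a\widetilde\lhd b=b$ and $b\widetilde\lhd a=a$, associativity reduces to associativity of disjoint union together with a case check on a three-block input $X_1\sqcup X_2\sqcup X_3$ showing that the mixed products agree for either bracketing; commutativity is the remark that the cross-action rules are symmetric under swapping the two blocks, so that $m=m\circ\tau$ as species morphisms, and the unit is the empty quandle in $\mathbb{Q}_\emptyset$. Connectedness is immediate since $\mathbb{Q}_\emptyset=\mathbf{k}$. The counit $\varepsilon$ is the projection onto $\mathbb{Q}_\emptyset$; to obtain the counit axioms I would first record that for $Y=\emptyset$ one has $\alpha(b)=1$ for every $b$, hence $\lhd^{X,\emptyset}=\lhd$, so that the two extreme terms $Y=\emptyset$ and $Y=X$ of $\Delta$ are exactly $1\otimes(X,\lhd)$ and $(X,\lhd)\otimes 1$.

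The heart of the proof is coassociativity, which I would establish by comparing $(\Delta\otimes\Id)\Delta$ and $(\Id\otimes\Delta)\Delta$ term by term along decompositions $X=A\sqcup B\sqcup C$ ($A$ the first tensor factor, $B$ the second, $C$ the third). On the left the surviving terms are indexed by chains of subquandles $A\subset A\sqcup B\subset X$ and carry the structures $(A,\lhd)$, $(B,\lhd^{A\sqcup B,A})$, $(C,\lhd^{X,A\sqcup B})$; on the right they are indexed by a subquandle $A$ of $X$ together with a subquandle $B$ of the complement quandle $(X\backslash A,\lhd^{X,A})$, and carry $(A,\lhd)$, $(B,\lhd^{X,A}\big|_B)$, $(C,(\lhd^{X,A})^{X\backslash A,B})$. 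Two compatibilities make the structures agree. The restriction identity $\lhd^{A\sqcup B,A}=\lhd^{X,A}\big|_{B}$ holds because, for $b\in B$, the exponent of $R_b$ stabilising $A$ pointwise is the same whether $R_b$ is iterated inside $A\sqcup B$ or inside $X$, the set $A\sqcup B$ being $R_b$-stable. The iteration identity $\lhd^{X,A\sqcup B}\big|_C=(\lhd^{X,A})^{X\backslash A,B}\big|_C$ reduces to the exponent computation
\[
\alpha_{A\sqcup B}(c)=\mathrm{lcm}\big(\alpha_A(c),\alpha_B(c)\big)=\beta(c)\,\alpha_A(c),
\]
where $\alpha_A(c),\alpha_B(c)$ are the orders of $R_c$ on $A$ and on $B$ and $\beta(c)$ is the order on $B$ of $R_c^{\alpha_A(c)}$: the exponents fixing $A$ pointwise form the subgroup $\alpha_A(c)\mathbb{Z}$ of $\mathbb{Z}$, likewise for $B$, and the least common exponent that is a multiple of $\alpha_A(c)$ is precisely this least common multiple.

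The genuinely delicate point — and the step I expect to be the main obstacle — is matching the two index sets: given a subquandle $A$ of $X$, one must show that the conditions \textquotedblleft$A\sqcup B$ is a subquandle of $X$\textquotedblright\ and \textquotedblleft$B$ is a subquandle of $(X\backslash A,\lhd^{X,A})$\textquotedblright\ select the same subsets $B\subset X\backslash A$. The forward direction follows from Proposition \ref{compl-quandle} applied inside $A\sqcup B$ together with the restriction identity above. The reverse direction is the crux: from closure of $B$ under the \emph{iterated} map $\lhd^{X,A}$ one must recover closure of $A\sqcup B$ under the single operation $\lhd$, and this is where the interplay between the exponents $\alpha$ and the quandle axioms has to be exploited most carefully, using the conjugation relation $R^m_{R^n_c(b)}=R^n_c\circ R^m_b\circ R^{-n}_c$ from the proof of Proposition \ref{compl-quandle}. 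Once coassociativity is secured, the bialgebra compatibility $\Delta\circ m=(m\otimes m)\circ(\Id\otimes\tau\otimes\Id)\circ(\Delta\otimes\Delta)$ follows from two clean structural facts: a subset of a disjoint-union quandle $X_1\sqcup X_2$ is a subquandle if and only if it has the form $Y_1\sqcup Y_2$ with each $Y_i$ a subquandle of $X_i$; and, because all cross-actions are trivial, $R_b$ acts as the identity off the block of $b$, so the complement operation $\lhd^{X_1\sqcup X_2,\,Y_1\sqcup Y_2}$ restricts blockwise to $\lhd^{X_1,Y_1}$ and $\lhd^{X_2,Y_2}$ with trivial cross-action, showing that $\Delta$ is multiplicative.
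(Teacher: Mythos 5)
Your decomposition of the problem matches the paper's, and on the structural identities you are more careful than the paper itself: the restriction identity $\lhd^{A\sqcup B,A}=\lhd^{X,A}\big|_B$ and the exponent formula $\alpha_{A\sqcup B}(c)=\operatorname{lcm}\bigl(\alpha_A(c),\alpha_B(c)\bigr)=\alpha_A(c)\beta(c)$ are both correct and are exactly what is needed to identify the quandle structures on matching terms, whereas the paper only asserts the resulting equalities $\lhd^{Y,Z}=\lhd^{X,Z}$ and $\lhd^{X,Y}=\lhd^{X\setminus Z,U}$. The paper likewise only asserts that $(Z,Y)\mapsto(Z,Y\setminus Z)$ is a bijection of index sets, so you have correctly isolated the one step that carries all the weight.

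But that step, which you leave as something ``to be exploited most carefully,'' is a genuine gap, and it cannot be closed: the reverse direction of the index correspondence is false. Take the paper's own second order-$3$ example, the dihedral quandle $Q=\begin{bmatrix} a&c&b\\ c&b&a\\ b&a&c\end{bmatrix}$, and $A=\{a\}$. Here $R_b=(a\,c)$ and $R_c=(a\,b)$, so $\alpha(b)=\alpha(c)=2$ and $\lhd^{X,A}$ is the trivial quandle on $\{b,c\}$; hence $B=\{b\}$ is a subquandle of $\bigl(X\setminus A,\lhd^{X,A}\bigr)$, yet $A\sqcup B=\{a,b\}$ is not a subquandle of $Q$, because $a\lhd b=c$. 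Consequently $(\Id\otimes\Delta)\Delta(Q)$ contains the basis element $(\{a\})\otimes(\{b\})\otimes(\{c\})$ (three singleton quandles) with coefficient $1$, whereas $(\Delta\otimes\Id)\Delta(Q)$ contains no term with three nonempty factors, since the only proper nonempty subquandles of $Q$ are singletons; the asymmetry is already visible in the paper's displayed $\Delta(Q)$, which has terms of the form $(\hbox{singleton})\otimes(\hbox{pair})$ but none of the form $(\hbox{pair})\otimes(\hbox{singleton})$. So $\Delta$ as defined is not coassociative on $Q$, and no manipulation of the conjugation relation $R^m_{R^n_c(b)}=R^n_c\circ R^m_b\circ R^{-n}_c$ will produce the missing implication; any proof must first modify the coproduct (restrict its index set, or change $\lhd^{X,Y}$) so that subquandles of the complement quandle pull back to subquandles of $X$. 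A smaller independent problem: the cross-action $a\widetilde{\lhd}b=b$ for $a\in X_1$, $b\in X_2$, as literally written, makes $R_b$ non-injective, so $m$ does not even land in quandles; you reasonably read it as the trivial action $a\widetilde{\lhd}b=a$, but this should be said explicitly.
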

\begin{proof}
Let $Q_1=(X_1, \lhd_1)\in \mathbb{Q}_{X_1}$, $Q_2=(X_2, \lhd_2)\in \mathbb{Q}_{X_2}$ and $Q_3=(X_3, \lhd_3)\in \mathbb{Q}_{X_3}$, we have:\\
$m(m\otimes Id)(Q_1\otimes Q_2\otimes Q_3)=(X_1\sqcup X_2\sqcup X_3, \lhd)$ where:
\begin{itemize}
\item $a\lhd b=a\lhd_1 b$, \hbox{ for all } $a, b\in X_1$,
\item $a\lhd b=a\lhd_2 b$, \hbox{ for all } $a, b\in X_2$,
\item $a\lhd b=b$, \hbox{ for all } $a\in X_1, b\in X_2$,
\item $a\lhd b=a$, \hbox{ for all } $a\in X_2, b\in X_1$,
\item $a\lhd b=a\lhd_3 b$, \hbox{ for all } $a, b\in X_3$,
\item $a\lhd b=b$, \hbox{ for all } $a\in X_1\sqcup X_2, b\in X_3$,
\item $a\lhd b=a$, \hbox{ for all } $a\in X_3, b\in X_1\sqcup X_2$.
\end{itemize}
On the other hand,\\
$m(m\otimes Id)(Q_1\otimes Q_2\otimes Q_3)=(X_1\sqcup X_2\sqcup X_3, \overline{\lhd})$ where:
\begin{itemize}
\item $a\overline{\lhd} b=a\lhd_1 b$, \hbox{ for all } $a, b\in X_1$,
\item $a\overline{\lhd} b=a\lhd_2 b$, \hbox{ for all } $a, b\in X_2$,
\item $a\overline{\lhd} b=a\lhd_3 b$, \hbox{ for all } $a, b\in X_3$,
\item $a\overline{\lhd} b=a$, \hbox{ for all } $a\in X_2, b\in X_3$,
\item $a\overline{\lhd} b=a$, \hbox{ for all } $a\in X_3, b\in X_2$,
\item $a\overline{\lhd} b=b$, \hbox{ for all } $a\in X_1, b\in X_2\sqcup X_3$,
\item $a\overline{\lhd} b=a$, \hbox{ for all } $a\in X_2\sqcup X_3, b\in X_1$.
\end{itemize}
So, $m(m\otimes \hbox{Id})(Q_1\otimes Q_2\otimes Q_3)=(X_1\sqcup X_2\sqcup X_3, \lhd)=(X_1\sqcup X_2\sqcup X_3, \overline{\lhd})=m(m\otimes \hbox{Id})(Q_1\otimes Q_2\otimes Q_3)$.\\

\noindent We have for any finite quandle  $Q=(X, \lhd)$:
$$(\Delta \otimes \hbox{Id})\Delta (X, \lhd)=\sum \limits_{{ \hbox{ \tiny{Y subquandle of X}}}\atop \scriptstyle{Z \hbox{ \tiny{subquandle of Y}}}}(Z, \lhd)\otimes (Y\backslash Z, \lhd^{Y, Z})\otimes (X\backslash Y, \lhd^{X, Y}).$$
On the other hand, we have
$$(\hbox{Id}\otimes \Delta)\Delta (X, \lhd)=\sum \limits_{{ \hbox{ \tiny{Z subquandle of X}}}\atop \scriptstyle{U \hbox{ \tiny{subquandle of }}X\backslash Y}}(Z, \lhd)\otimes (U, \lhd^{X, Z})\otimes \big((X\backslash Z)\backslash U, \lhd^{X\backslash Z, U}\big).$$
The property of coassociativity can be derived from a straightforward observation, namely that the map $(Z, Y) \mapsto (Z, Y\backslash Z)$ is a bijection. This map takes pairs $(Z, Y)$ where $Y$ is a subquandle of $X$ and $Z$ is a subquandle of $Y$, and maps them onto pairs $(Z, U)$ where $Z$ is a subquandle of $X$ and $U$ is a subquandle of $X\backslash Z$. The inverse of this map is given by $(Z, U) \mapsto (Z, Z\sqcup U)$. As a result, $\lhd^{Y, Z}=\lhd^{X, Z}$ and $\lhd^{X, Y}=\lhd^{X\backslash Z, U}$. Finally, we show immediately that
		$$\Delta \circ m\big( (X_1, \lhd_1)\otimes(X_2, \lhd_2)\big) =m^{23}\big(\Delta(X_1, \lhd_1)\otimes\Delta (X_2, \lhd_2)\big).$$
\end{proof}
\begin{corollary}\label{corollary}
Let $X$ be any finite set and $\mathbb{QT}_X= \hbox{span}(X, \mathcal T,\lhd)$,  where  $(X, \lhd)$  is a quandle and  $\mathcal T$  is a topology compatible with $(X, \lhd)$.
Let $m$ the product defined in $\mathbb{QT}$ by
$$m\big((X_1, \mathcal{T}_1, \lhd_1))\otimes (X_2, \mathcal{T}_2, \lhd_2)\big)=(X_1\sqcup X_2, \mathcal{T}_1\mathcal{T}_2, \widetilde{\lhd})$$
and let $\Delta$ the coproduct defined by \eqref{coprod-ext}. Then $(\mathbb{QT}, m, \Delta)$ is a commutative connected twisted bialgebra.	
\end{corollary}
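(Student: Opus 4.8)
The plan is to reduce everything to the quandle-level statement already proved in the Theorem, so that the only genuinely new content is the verification that the quasi-order layer is transported correctly by $m$ and $\Delta$ and that the resulting spaces remain topological quandles. Forgetting the topology, $(\mathbb{Q}, m, \Delta)$ is a commutative connected twisted bialgebra by the Theorem; hence associativity of $m$, coassociativity of $\Delta$, the bialgebra compatibility $\Delta\circ m = m^{23}(\Delta\otimes\Delta)$, and connectedness all hold at the level of the underlying quandles. It remains to check that $m$ and $\Delta$ are well defined as maps into $\mathbb{QT}$, i.e. that each output is a legitimate topological quandle, and that the quasi-order bookkeeping is coherent enough that the axioms lift verbatim.

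For well-definedness of $\Delta$, consider a topological quandle $(X,\mathcal T,\lhd)$ and a subquandle $Y$. The factor $(Y,\mathcal T_{|Y},\lhd)$ is a topological quandle because the compatibility condition of the Remark (for all $x,y,x',y'$, $x\le x'$ and $y\le y'$ imply $x\lhd y\le x'\lhd y'$) simply restricts to $Y$. For the complement $(X\backslash Y,\mathcal T_{|X\backslash Y},\lhd^{X,Y})$ I would use the second equivalent form of the Remark. The right translation $R^{X,Y}_b=(R_b^{\alpha(b)})_{|X\backslash Y}$ is a homeomorphism: $R_b$ is a homeomorphism of $(X,\mathcal T)$, so $R_b^{\alpha(b)}$ is too, and since $R_b^{\alpha(b)}$ fixes $Y$ pointwise it maps $X\backslash Y$ bijectively onto itself, hence restricts to a self-homeomorphism for the subspace quasi-order.

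The main obstacle is the remaining requirement, continuity of $L^{X,Y}_b$, equivalently the compatibility inequality $R_b^{\alpha(b)}(a)\le R_{b'}^{\alpha(b')}(a')$ whenever $a\le a'$ and $b\le b'$ in $X\backslash Y$. The monotone dependence on the left argument is immediate since $R_b^{\alpha(b)}$ is an order-automorphism; the difficulty is the dependence on $b$, because the exponent $\alpha(b)$ itself varies. I would first extract from continuity of $L$ in $(X,\lhd)$ the pointwise inequality $R_b(y)\le R_{b'}(y)$ for all $y$, and promote it by induction, alternating monotonicity of $R_b$ and of $x\mapsto R_x(\cdot)$ exactly as in the computation in Proposition \ref{compl-quandle}, to $R_b^{n}(y)\le R_{b'}^{n}(y)$ for every $n$. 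The crux is then the lemma that $\alpha(b)=\alpha(b')$ whenever $b\le b'$; granting this, taking $n=\alpha(b)=\alpha(b')$ yields the desired inequality. Establishing this constancy of $\alpha$ along comparable pairs, plausibly via the observation that $R_b(y)\sim_{\mathcal T} R_{b'}(y)$ for all $y$ so that $R_b$ and $R_{b'}$ induce the same automorphism of the poset of classes meeting $Y$, is the step I expect to require the most care.

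For the product, $\mathcal T_1\mathcal T_2$ is the disjoint-union quasi-order with no relations across $X_1$ and $X_2$; compatibility with $\widetilde{\lhd}$ reduces to the two given blocks, the cross cases being vacuous since comparable elements necessarily lie in the same block. The remaining axioms then lift from the Theorem once one records the coherence of the quasi-order operations: disjoint union is associative, iterated restriction satisfies $(\mathcal T_{|Y})_{|Z}=\mathcal T_{|Z}$ and $(\mathcal T_{|X\backslash Z})_{|U}=\mathcal T_{|X\backslash Y}$ under the bijection $(Z,Y)\mapsto(Z,Y\backslash Z)$ used in the Theorem, and restriction commutes with disjoint union, every subquandle of $X_1\sqcup X_2$ being a disjoint union of subquandles. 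With these identities in hand, coassociativity of $\Delta$ and the compatibility $\Delta\circ m=m^{23}(\Delta\otimes\Delta)$ follow from their quandle-level counterparts, while connectedness and the counit are unchanged, completing the proof.
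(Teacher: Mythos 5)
Your proposal and the paper's proof diverge sharply in scope. The paper's argument for Corollary \ref{corollary} is a two-line reduction: it re-runs the coassociativity computation of the preceding theorem with the restricted topologies $\mathcal{T}_{|Z}$, $\mathcal{T}_{|Y\backslash Z}$, $\mathcal{T}_{|X\backslash Y}$ carried along, and stops there; well-definedness of $\Delta$ on $\mathbb{QT}$ --- i.e.\ the fact that $(X\backslash Y,\mathcal{T}_{|X\backslash Y},\lhd^{X,Y})$ is again a \emph{topological} quandle --- is taken for granted. You are right that this is where the actual content lies, and your treatment of the easy half (the factor $(Y,\mathcal{T}_{|Y},\lhd)$, the homeomorphism property of $R_b^{\alpha(b)}$, and the compatibility of $\mathcal{T}_1\mathcal{T}_2$ with $\widetilde{\lhd}$) is correct.

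However, the step you yourself flag as delicate is a genuine gap, not just a deferred detail. You need $\alpha(b)=\alpha(b')$ whenever $b\le_{\mathcal T}b'$ in $X\backslash Y$, and the route you sketch does not reach it. From $b\le_{\mathcal T}b'$ and compatibility one gets $R_b^{n}(y)\le_{\mathcal T}R_{b'}^{n}(y)$ for all $y$ and $n$; taking $n=\alpha(b)$ and $y\in Y$ gives $y\le_{\mathcal T}R_{b'}^{\alpha(b)}(y)$, and since $R_{b'}^{\alpha(b)}$ is an order-preserving bijection of a finite quasi-ordered set, iterating only upgrades this to $R_{b'}^{\alpha(b)}(y)\sim_{\mathcal T}y$. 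That is exactly the conclusion of your "same automorphism of the poset of classes" observation, but it is strictly weaker than $R_{b'}^{\alpha(b)}(y)=y$, which is what you need to conclude $\alpha(b')\mid\alpha(b)$ (recall $\{n: R^n_{b'|Y}=\mathrm{Id}_{|Y}\}=\alpha(b')\mathbb{Z}\cap\mathbb{N}_{>0}$). Nothing in the axioms visibly forbids $R_{b'}^{\alpha(b)}$ from permuting a $\sim_{\mathcal T}$-equivalence class of $Y$ nontrivially, so either you must supply an argument excluding this, or find a different decomposition of the target inequality $R_b^{\alpha(b)}(a)\le_{\mathcal T}R_{b'}^{\alpha(b')}(a')$ that avoids comparing the exponents. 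Until one of these is done the proposal is incomplete (and, to be fair, the paper's own proof is silent on the same point).
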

\begin{proof}
It suffices to show the coassociativity of coproduct $\Delta$ in the species of topological quandles $\mathbb{QT}$.
Let $X$ be a finite set and $Q=(X, \mathcal{T}, \lhd)\in \mathbb{QT}_X$, we have
\begin{align*}
   (\Delta \otimes \hbox{Id})\Delta (X, \mathcal{T}, \lhd)&=\sum \limits_{{\hbox{ \tiny{Y subquandle of X}}}\atop \scriptstyle{Z \hbox{ \tiny{subquandle of Y}}}}(Z, \mathcal{T}_{|Z}, \lhd)\otimes (Y\backslash Z, \mathcal{T}_{|Y\backslash Z}, \lhd^{Y, Z})\otimes (X\backslash Y, \mathcal{T}_{|X\backslash Y}, \lhd^{X, Y}).
\end{align*}
On the other hand,
$$(\hbox{Id}\otimes \Delta)\Delta (X,\mathcal{T} , \lhd)=\sum \limits_{{\hbox{ \tiny{Z subquandle of X}}}\atop \scriptstyle{U \hbox{ \tiny{subquandle of }}X\backslash Y}}(Z, \mathcal{T}_{|Z}, \lhd)\otimes (U, \mathcal{T}_{|U},\lhd^{X, Z})\otimes ((X\backslash Z)\backslash U, \mathcal{T}_{|(X\backslash Z)\backslash U} \lhd^{X\backslash Z, U}).$$
which proves Corollary \ref{corollary}.
\end{proof}

\begin{lemma}\label{lemme-sandwich}(\cite[Propostion 2.7]{acg10})
		Let $\mathcal{T}$ and $\mathcal{T}^{\prime \prime}$ be two topologies on $X$. If $ \mathcal{T}^{\prime \prime}\oprec \mathcal{T}$,
		then $\mathcal{T}^{\prime}\longmapsto \mathcal{T}^{\prime}/\mathcal{T}^{\prime \prime}$ is a bijection from the set of topologies $\mathcal{T}^{\prime}$ on $X$ such that $\mathcal{T}^{\prime \prime}\oprec \mathcal{T}^{\prime}\oprec \mathcal{T}$ , onto the set of topologies $\mathcal{U}$ on $X$ such that $\mathcal{U}\oprec \mathcal{T}/\mathcal{T}^{\prime \prime}$. 
	\end{lemma}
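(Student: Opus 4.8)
The plan is to prove the statement purely at the level of quasi-orders, since the quandle structure plays no role here; throughout I identify a topology $\mathcal S$ on $X$ with its specialization quasi-order $\le_{\mathcal S}$, recalling that $\mathcal S'\prec\mathcal S$ means $\le_{\mathcal S'}\subseteq\le_{\mathcal S}$ and that the quasi-order of a quotient $\mathcal S/\mathcal R$ is the transitive closure of $\le_{\mathcal S}\cup\ge_{\mathcal R}$. The first preliminary step is to extract the structural content of the hypothesis $\mathcal T''\oprec\mathcal T$: applying the third defining axiom of $\oprec$ with $\mathcal T'=\mathcal T''$ shows that $x\sim_{\mathcal T/\mathcal T''}y$ holds exactly when $x$ and $y$ lie in the same connected component of $\mathcal T''$. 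Thus $\mathcal T/\mathcal T''$ descends to a genuine partial order on the set $\overline X$ of $\mathcal T''$-components, and every topology $\mathcal U\oprec\mathcal T/\mathcal T''$ inherits the same $\sim$-classes. This is the bookkeeping that makes the whole correspondence behave like the third isomorphism theorem.

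Next I would check that $\Phi:\mathcal T'\mapsto\mathcal T'/\mathcal T''$ is well defined, i.e. that $\mathcal T''\oprec\mathcal T'\oprec\mathcal T$ forces $\mathcal T'/\mathcal T''\oprec\mathcal T/\mathcal T''$. The inclusion $\mathcal T'/\mathcal T''\prec\mathcal T/\mathcal T''$ is immediate from monotonicity of the transitive-closure construction in its first argument, since $\le_{\mathcal T'}\subseteq\le_{\mathcal T}$ gives the same containment of the generating relations. The restriction axiom follows because on each connected component both $\mathcal T'$ and $\mathcal T$ agree with $\mathcal T''$ (the second axiom applied to $\mathcal T''\oprec\mathcal T'$ and to $\mathcal T''\oprec\mathcal T$), so the two quotients agree there as well. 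The $\sim$-axiom for the pair $(\mathcal T'/\mathcal T'',\,\mathcal T/\mathcal T'')$ is then obtained by comparing the $\sim$-classes of both quotients with the $\mathcal T''$-components, using the third axiom of the two given $\oprec$-relations.

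I would then construct the inverse $\Psi$. Given $\mathcal U\oprec\mathcal T/\mathcal T''$, define $\Psi(\mathcal U)=\mathcal T'$ to be the topology whose quasi-order is generated, as a quasi-order, by $\le_{\mathcal T''}$ together with all pairs $(x,y)$ satisfying both $x\le_{\mathcal U}y$ and $x\le_{\mathcal T}y$. Intersecting the $\mathcal U$-part with $\le_{\mathcal T}$ guarantees $\mathcal T'\prec\mathcal T$ (all generators lie in the transitive relation $\le_{\mathcal T}$), while joining with $\le_{\mathcal T''}$ guarantees $\mathcal T''\prec\mathcal T'$ and that inside each $\mathcal T''$-component $\mathcal T'$ coincides with $\mathcal T''$. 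One then verifies the three axioms of $\mathcal T''\oprec\Psi(\mathcal U)\oprec\mathcal T$, the crucial input being that the $\sim$-classes of $\Psi(\mathcal U)$ are not enlarged beyond the $\mathcal T''$-components, which is exactly what the third $\oprec$-axiom for $\mathcal U\oprec\mathcal T/\mathcal T''$ provides. Mutual inversion is then checked: by the restriction axiom any $\mathcal T'$ in the interval agrees with $\mathcal T''$ inside every $\mathcal T''$-component, so $\mathcal T'$ is completely determined by its behaviour \emph{across} components, and that behaviour is recorded precisely by $\mathcal T'/\mathcal T''$, giving $\Psi\circ\Phi=\mathrm{id}$; conversely $\Phi\circ\Psi=\mathrm{id}$ amounts to showing that quotienting $\Psi(\mathcal U)$ by $\mathcal T''$ returns $\mathcal U$.

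The main obstacle is the transitive closure built into the definition of the quotient: both the well-definedness of $\Psi$ and the identity $\Phi\circ\Psi=\mathrm{id}$ require showing that forming these transitive closures creates no spurious relations, and in particular no new $\sim$-collapses merging distinct $\mathcal T''$-components. This non-collapsing is exactly the role of the third $\oprec$-axiom, and I expect the bulk of the work to consist of a careful induction on the length of the chains realizing the transitive closures, confirming that every such chain can be rerouted through $\mathcal T''$ and through the $\mathcal T$-bounded pullback of $\mathcal U$ without leaving the prescribed interval.
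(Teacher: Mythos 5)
First, a point of comparison: the paper does not actually prove this lemma --- it is imported verbatim as \cite[Proposition 2.7]{acg10}, so there is no in-paper proof to measure your argument against. Your outline does follow the natural ``third isomorphism theorem'' route that one would expect (and that the cited reference takes): identify the $\sim_{\mathcal T/\mathcal T''}$-classes with the $\mathcal T''$-components, show $\Phi:\mathcal T'\mapsto\mathcal T'/\mathcal T''$ is well defined, and build an inverse by pulling $\mathcal U$ back inside $\le_{\mathcal T}$. Your candidate inverse is also essentially correct: since one can show $\le_{\mathcal T''}\subseteq\le_{\mathcal U}$ for any $\mathcal U\oprec\mathcal T/\mathcal T''$ (the third axiom forces $\mathcal T''$-components inside $\mathcal U$-components, and the second axiom then upgrades this to an order relation), your generated quasi-order collapses to the plain intersection $\le_{\mathcal T}\cap\le_{\mathcal U}$, which is already transitive.

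That said, as written this is a plan rather than a proof, and the gaps you leave are exactly the nontrivial ones. Concretely: (a) for the restriction axiom of $\mathcal T'/\mathcal T''\oprec\mathcal T/\mathcal T''$ you argue ``on each connected component both $\mathcal T'$ and $\mathcal T$ agree with $\mathcal T''$,'' but the sets $Y$ that must be tested are connected for $\mathcal T'/\mathcal T''$, hence are in general \emph{unions} of several $\mathcal T''$-components, so agreement inside each single component does not immediately give agreement of the two quotients on $Y$; (b) the verification that $\Psi(\mathcal U)$ satisfies all three axioms of $\mathcal T''\oprec\Psi(\mathcal U)\oprec\mathcal T$ is only asserted; and (c) the identity $\Phi\circ\Psi=\mathrm{id}$ requires showing that every relation $x\le_{\mathcal U}y$ --- which a priori is only witnessed by a zigzag chain alternating $\le_{\mathcal T}$ and $\ge_{\mathcal T''}$ steps --- can be rerouted through the intersection $\le_{\mathcal T}\cap\le_{\mathcal U}$ and $\ge_{\mathcal T''}$, and that no spurious $\sim$-collapses appear. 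You explicitly acknowledge that this chain analysis is ``the bulk of the work'' and do not carry it out, so the argument is not yet complete; since the paper relies on the citation rather than reproving the result, you would need to either supply that induction or likewise defer to \cite{acg10}.
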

\begin{theorem}\label{Kebab}
	Let $Q=(X, \lhd)$ be a finite quandle and let $\mathcal{T}$ be a topology on $X$. For any $ \mathcal{T}^{\prime}\oprec \mathcal{T}$ we have:
 \begin{itemize}
\item[1-] if $\mathcal{T}$ and $\mathcal{T}^{\prime}$ are Q-compatible, then $\mathcal{T}/\mathcal{T}^{\prime}$ is Q-compatible,
\item[2-] if $\mathcal{T}$ and $\mathcal{T}/\mathcal{T}^{\prime}$ are Q-compatible, then $\mathcal{T}^{\prime}$ is Q-compatible.
 \end{itemize}
	\end{theorem}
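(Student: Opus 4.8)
The plan is to reformulate Q-compatibility in the form supplied by the Remark: a topology $\mathcal S$ on $X$ is Q-compatible if and only if $R_b$ is monotone for $\le_{\mathcal S}$ for every $b\in X$ and $L_a$ is monotone for $\le_{\mathcal S}$ for every $a\in X$. Indeed, setting $y=y'$ (resp. $x=x'$) in the condition ``$x\le x'$ and $y\le y'$ imply $x\lhd y\le x'\lhd y'$'' yields monotonicity of $R_b$ (resp. $L_a$), and conversely $x\lhd y\le x'\lhd y\le x'\lhd y'$ recovers the full condition by transitivity. Since $\le_{\mathcal T/\mathcal T'}$ is by definition the transitive closure of the relation $\mathcal R$ (where $x\,\mathcal R\,y$ iff $x\le_{\mathcal T}y$ or $y\le_{\mathcal T'}x$), and since a map sending $\mathcal R$-related elements to $\le_{\mathcal T/\mathcal T'}$-related ones is automatically monotone for $\le_{\mathcal T/\mathcal T'}$ (propagate along chains), it suffices throughout to test monotonicity on the generating relation $\mathcal R$.

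For part~1, assume $\mathcal T$ and $\mathcal T'$ are Q-compatible and fix $b\in X$. Let $x\,\mathcal R\,y$. If $x\le_{\mathcal T}y$, then Q-compatibility of $\mathcal T$ gives $R_b(x)\le_{\mathcal T}R_b(y)$, hence $R_b(x)\le_{\mathcal T/\mathcal T'}R_b(y)$ because $\le_{\mathcal T}\ \subseteq\ \le_{\mathcal T/\mathcal T'}$. If instead $y\le_{\mathcal T'}x$, then Q-compatibility of $\mathcal T'$ gives $R_b(y)\le_{\mathcal T'}R_b(x)$, which is exactly the second clause defining $R_b(x)\,\mathcal R\,R_b(y)$, so again $R_b(x)\le_{\mathcal T/\mathcal T'}R_b(y)$. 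Thus $R_b$ is monotone for $\le_{\mathcal T/\mathcal T'}$, and the identical argument for $L_a$ (using monotonicity in the left variable) shows $\mathcal T/\mathcal T'$ is Q-compatible.

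Part~2 is the delicate direction, and its crux is a reconstruction of the fine quasi-order from the other two: under $\mathcal T'\oprec\mathcal T$,
$$x\le_{\mathcal T'}y\iff\big(x\le_{\mathcal T}y \text{ and } y\le_{\mathcal T/\mathcal T'}x\big).$$
The implication ``$\Rightarrow$'' is immediate: $\le_{\mathcal T'}\ \subseteq\ \le_{\mathcal T}$ gives the first condition, while $x\le_{\mathcal T'}y$ means precisely $y\,\mathcal R\,x$, whence $y\le_{\mathcal T/\mathcal T'}x$. For ``$\Leftarrow$'' I would invoke the three axioms defining $\oprec$: from $x\le_{\mathcal T}y$ one gets $x\le_{\mathcal T/\mathcal T'}y$, which together with $y\le_{\mathcal T/\mathcal T'}x$ gives $x\sim_{\mathcal T/\mathcal T'}y$; the third axiom then places $x,y$ in a common connected component $C$ of $\mathcal T'$, and the second axiom ($\mathcal T'_{|C}=\mathcal T_{|C}$) upgrades $x\le_{\mathcal T}y$ to $x\le_{\mathcal T'}y$. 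Granting this equivalence, part~2 follows: if $x\le_{\mathcal T'}y$ then $x\le_{\mathcal T}y$ and $y\le_{\mathcal T/\mathcal T'}x$; applying $R_b$ and using Q-compatibility of $\mathcal T$ and of $\mathcal T/\mathcal T'$ gives $x\lhd b\le_{\mathcal T}y\lhd b$ and $y\lhd b\le_{\mathcal T/\mathcal T'}x\lhd b$, so the equivalence read backwards yields $x\lhd b\le_{\mathcal T'}y\lhd b$, and similarly for $L_a$.

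I expect the main obstacle to be the ``$\Leftarrow$'' half of the reconstruction equivalence, as this is exactly where the refined relation $\oprec$ — rather than mere finerness $\prec$ — is indispensable: without the coincidence of connected components and the local equality $\mathcal T'_{|C}=\mathcal T_{|C}$, the fine order $\le_{\mathcal T'}$ cannot be recovered from $\le_{\mathcal T}$ and $\le_{\mathcal T/\mathcal T'}$. This is also where Lemma~\ref{lemme-sandwich} and the surrounding structure of \cite{acg10} do the real work; the remaining steps are the routine two-case checks on the generators of the transitive closure carried out above.
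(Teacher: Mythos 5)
Your proof is correct and follows essentially the same route as the paper's: part 1 is the same case analysis on the generating relation $\mathcal{R}$ (the paper treats both variables at once in a four-case interleaved chain, you treat them separately via $R_b$ and $L_a$), and part 2 uses exactly the paper's key ingredients --- the second and third axioms of $\oprec$ --- which you merely repackage as the reconstruction equivalence $x\le_{\mathcal T'}y\iff\big(x\le_{\mathcal T}y\text{ and }y\le_{\mathcal T/\mathcal T'}x\big)$. The single-variable organization is a cosmetic (if somewhat cleaner) variant, not a different argument.
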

 \begin{proof}
 Let $Q=(X, \lhd)$ be a finite quandle and let $\mathcal{T}^{\prime}\oprec \mathcal{T}$.\\
 1- If $\mathcal{T}$ and $\mathcal{T}^{\prime}$ are Q-compatible, then: for $x, x', y, y' \in X$ the hypotheses
$x\le_{\mathcal{T}/\mathcal{T}^{\prime}}x'$ and $y\le_{\mathcal{T}/\mathcal{T}^{\prime}}y'$ together imply that there exist $t_1,...,t_n, s_1,...,s_m \in X$ such that $x\mathcal{R}t_1\mathcal{R}t_2...\mathcal{R}t_n\mathcal{R}y$ and $x'\mathcal{R}s_1\mathcal{R}s_2...\mathcal{R}s_m\mathcal{R}y'$. Recall that $a\mathcal R b$ means ($a\le_{\mathcal T} b$ or $a\ge_{\mathcal T'} b$). First, we prove that $x\lhd y \mathcal{R} t_1\lhd s_1$ or $x\lhd y \mathcal{R} x\lhd s_1 \mathcal{R} t_1\lhd s_1$.\\
For $x\mathcal{R}t_1$, and $y\mathcal{R}s_1$, we have four possible cases:
\begin{itemize}
    \item First case; $x\le_{\mathcal{T}}t_1$, and $y\le_{\mathcal{T}} s_1$.
Since $\mathcal{T}$ is Q-compatible, then $x\lhd y\leq_{\mathcal{T}} t_1\lhd s_1$, hence $x\lhd y \mathcal{R} t_1\lhd s_1$.
    \item Second case; $x\ge_{\mathcal{T}'}t_1$, and $y\ge_{\mathcal{T}'} s_1$.
Since $\mathcal{T}'$ is Q-compatible, then $x\lhd y\ge_{\mathcal{T}'} t_1\lhd s_1$, hence $x\lhd y \mathcal{R} t_1\lhd s_1$.
    \item Third case; $x\le_{\mathcal{T}}t_1$, and $y\ge_{\mathcal{T}'} s_1$.
Since $R_{s_1}$ is continuous, then $R_{s_1}(x)\leq_{\mathcal{T}} R_{s_1}(t_1)$, so $x\lhd s_1 \mathcal{R} t_1\lhd s_1$  and since $L_{x}$ is continuous, then $L_x(y)\ge_{\mathcal{T}'}L_x(s_1)$, so $x\lhd y \ge_{\mathcal{T}'} x\lhd s_1$. therefore $x\lhd y \mathcal{R} x\lhd s_1 \mathcal{R} t_1\lhd s_1$.
    \item Fourth case; $x\ge_{\mathcal{T}'}t_1$, and $y\le_{\mathcal{T}} s_1$.
Since $R_{s_1}$ is continuous, then $R_{s_1}(x)\ge_{\mathcal{T}'} R_{s_1}(t_1)$, so $x\lhd s_1 \mathcal{R} t_1\lhd s_1$  and since $L_{x}$ is continuous, then $L_x(y)\le_{\mathcal{T}}L_x(s_1)$, so $x\lhd y \mathcal{R} x\lhd s_1$. Therefore $x\lhd y \mathcal{R} x\lhd s_1 \mathcal{R} t_1\lhd s_1$.\\
By induction we find that:\\
$(x\lhd y) \mathcal{R} (x\lhd s_1) \mathcal{R} (t_1\lhd s_1)\mathcal{R} (t_1\lhd s_2)\mathcal{R} (t_2\lhd s_2)\mathcal{R} (t_2\lhd s_3)... \mathcal{R} (t_{n-1}\lhd s_{n-1})\mathcal{R} (t_{n-1}\lhd s_{n})\mathcal{R} (t_{n}\lhd s_{n})\mathcal{R} (t_{n}\lhd y')\mathcal{R} (x'\lhd y')$. Therefore $\mathcal{T}/\mathcal{T}^{\prime}$ is Q-comatible.
\end{itemize}

2- If $\mathcal{T}$ and $\mathcal{T}/\mathcal{T}^{\prime}$ are Q-compatible, then:
 for $x, x', y, y' \in X$, using that $\mathcal{T}^{\prime}\oprec \mathcal{T}$, we get that ($x\le_{\mathcal{T}'}x'$ and $y\le_{\mathcal{T}'}y'$) implies ($x\le_{\mathcal{T}}x'$ and $y\le_{\mathcal{T}}y'$). Using that $\mathcal{T}$ is Q-compatible, then
  $x\lhd y\le_{\mathcal{T}}x'\lhd y'$.\\
  On the other hand,
  ($x\le_{\mathcal{T}'}x'$ and $y\le_{\mathcal{T}'}y'$) implies ($x\sim_{\mathcal{T}/\mathcal{T}^{\prime}}x'$ and $y\sim_{\mathcal{T}/\mathcal{T}^{\prime}}y'$). Using that $\mathcal{T}/\mathcal{T}^{\prime}$ is Q-compatible, then $x\lhd y\sim_{\mathcal{T}/\mathcal{T}^{\prime}}x'\lhd y'$. So $x\lhd y$ and $x'\lhd y'$ are in the same connected component for the topology $\mathcal{T}^{\prime}$, then:  $x\lhd y\le_{\mathcal{T}}x'\lhd y'$ implies that  $x\lhd y\sim_{\mathcal{T}'}x'\lhd y'$ (because $\mathcal{T}^{\prime}_{|Y}=\mathcal{T}_{|Y}$ for any subset $Y\subset X$ connected for the topology $\mathcal{T}^{\prime}$).
 This proves that $\mathcal{T}^{\prime}$ is Q-compatible.
 \end{proof}
\noindent We define the internal coproduct $\Gamma$ for all $(X, \mathcal{T}, \lhd) \in \mathbb{QT}_X$ by:
$$\Gamma(X, \mathcal{T}, \lhd)=\sum \limits_{\underset{\mathcal{T}^{\prime} \hbox{ \tiny{is a Q-compatible}}}{\mathcal{T}^{\prime}\soprec  \mathcal{T} }}(X, \mathcal{T}^{\prime}, \lhd)\otimes (X, \mathcal{T}/ \mathcal{T}^{\prime}, \lhd).$$
\begin{theorem}
		$(\mathbb{QT}, m, \Gamma)$ is a commutative twisted bialgebra of the second kind.
	\end{theorem}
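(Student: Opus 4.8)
The plan is to verify the four requirements for $(\mathbb{QT}, m, \Gamma)$ to be a commutative twisted bialgebra of the second kind: that $\Gamma$ is well defined as a map into the Hadamard product $(\mathbb{QT}\odot\mathbb{QT})_X=\mathbb{QT}_X\otimes\mathbb{QT}_X$, that it is coassociative and counital, and that it is multiplicative with respect to $m$ (the product $m$ being already associative, commutative and unital by the preceding theorem and Corollary \ref{corollary}). The whole strategy is to borrow the purely topological part of each identity from \cite{acg10}, where the same internal coproduct on topologies is shown to be coassociative with a counit, and to upgrade it by checking that the quandle operation $\lhd$, which is merely carried along unchanged in every tensor factor, never obstructs these identities. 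Well-definedness is immediate from Theorem \ref{Kebab}: if $\mathcal{T}$ and $\mathcal{T}'$ are $Q$-compatible with $\mathcal{T}'\oprec\mathcal{T}$, then $\mathcal{T}/\mathcal{T}'$ is $Q$-compatible as well by part (1), so both factors of each summand genuinely lie in $\mathbb{QT}_X$.

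For coassociativity I would expand both composites on a generator $(X,\mathcal{T},\lhd)$. One finds
$$(\Gamma\otimes\mathrm{Id})\Gamma(X,\mathcal{T},\lhd)=\sum(X,\mathcal{T}'',\lhd)\otimes(X,\mathcal{T}'/\mathcal{T}'',\lhd)\otimes(X,\mathcal{T}/\mathcal{T}',\lhd),$$
the sum running over chains $\mathcal{T}''\oprec\mathcal{T}'\oprec\mathcal{T}$ with $\mathcal{T}'$ and $\mathcal{T}''$ both $Q$-compatible, whereas
$$(\mathrm{Id}\otimes\Gamma)\Gamma(X,\mathcal{T},\lhd)=\sum(X,\mathcal{T}'',\lhd)\otimes(X,\mathcal{U},\lhd)\otimes(X,(\mathcal{T}/\mathcal{T}'')/\mathcal{U},\lhd),$$
the sum running over $\mathcal{T}''\oprec\mathcal{T}$ and $\mathcal{U}\oprec\mathcal{T}/\mathcal{T}''$ with $\mathcal{T}''$ and $\mathcal{U}$ both $Q$-compatible. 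For fixed $Q$-compatible $\mathcal{T}''\oprec\mathcal{T}$, Lemma \ref{lemme-sandwich} supplies the bijection $\mathcal{T}'\mapsto\mathcal{U}:=\mathcal{T}'/\mathcal{T}''$ between the intermediate topologies and those $\mathcal{U}\oprec\mathcal{T}/\mathcal{T}''$, while the identity $(\mathcal{T}/\mathcal{T}'')/(\mathcal{T}'/\mathcal{T}'')=\mathcal{T}/\mathcal{T}'$ from \cite{acg10} matches the three tensor factors term by term. It therefore remains only to see that the two ranges of summation coincide, i.e. that, for $\mathcal{T}''\oprec\mathcal{T}'\oprec\mathcal{T}$ with $\mathcal{T}$ and $\mathcal{T}''$ already $Q$-compatible, $\mathcal{T}'$ is $Q$-compatible if and only if $\mathcal{U}=\mathcal{T}'/\mathcal{T}''$ is.

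This equivalence is exactly what Theorem \ref{Kebab} delivers, and it is the crux of the argument. For the forward direction, $\mathcal{T}'$ and $\mathcal{T}''$ $Q$-compatible give $\mathcal{T}'/\mathcal{T}''$ $Q$-compatible by part (1) applied to $\mathcal{T}''\oprec\mathcal{T}'$. For the converse, assume $\mathcal{T}''$ and $\mathcal{U}=\mathcal{T}'/\mathcal{T}''$ are $Q$-compatible: part (1) applied to $\mathcal{T}''\oprec\mathcal{T}$ makes $\mathcal{T}/\mathcal{T}''$ $Q$-compatible; part (1) applied to $\mathcal{U}\oprec\mathcal{T}/\mathcal{T}''$ then makes $(\mathcal{T}/\mathcal{T}'')/\mathcal{U}=\mathcal{T}/\mathcal{T}'$ $Q$-compatible; and finally part (2) applied to $\mathcal{T}'\oprec\mathcal{T}$, using that $\mathcal{T}$ and $\mathcal{T}/\mathcal{T}'$ are $Q$-compatible, yields $\mathcal{T}'$ $Q$-compatible. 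Hence the index sets agree and coassociativity holds. The counit is the one inherited from the topological twisted bialgebra of \cite{acg10}; since $\lhd$ plays no role in it, both counit axioms follow formally from their topological counterparts.

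Finally, for multiplicativity I would exploit that $m$ is the disjoint union: $m\big((X_1,\mathcal{T}_1,\lhd_1)\otimes(X_2,\mathcal{T}_2,\lhd_2)\big)=(X_1\sqcup X_2,\mathcal{T}_1\mathcal{T}_2,\widetilde{\lhd})$ has $X_1$ and $X_2$ as unions of connected components with no cross relations, and $\widetilde{\lhd}$ is block diagonal with trivial cross action. Since a refinement $\mathcal{T}'\oprec\mathcal{T}_1\mathcal{T}_2$ cannot create order relations between $X_1$ and $X_2$, every such $\mathcal{T}'$ decomposes as $\mathcal{T}'_1\mathcal{T}'_2$ with $\mathcal{T}'_i\oprec\mathcal{T}_i$, the quotient decomposes as $(\mathcal{T}_1/\mathcal{T}'_1)(\mathcal{T}_2/\mathcal{T}'_2)$, and $\mathcal{T}'$ is $Q$-compatible for $\widetilde{\lhd}$ precisely when each $\mathcal{T}'_i$ is $Q$-compatible for $\lhd_i$ (the trivial cross terms imposing no condition). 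This gives a bijection between the summands of $\Gamma\circ m$ and those of $(m\odot m)\circ(\Gamma\otimes\Gamma)$, after the usual middle swap $\tau_{23}$, which is the bialgebra compatibility; commutativity of $m$ is clear from the symmetry of the disjoint union. The only genuinely delicate point is the converse half of the coassociativity equivalence above, where one must route the $Q$-compatibility of $\mathcal{T}'$ through both clauses of Theorem \ref{Kebab} and the topological third-isomorphism identity, rather than reading it off a single clause.
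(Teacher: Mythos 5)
Your proof follows essentially the same route as the paper's: expand both composites $(\Gamma\otimes\mathrm{Id})\Gamma$ and $(\mathrm{Id}\otimes\Gamma)\Gamma$, match them via the bijection of Lemma \ref{lemme-sandwich}, invoke Theorem \ref{Kebab} to identify the index sets, and check multiplicativity from the block structure of the disjoint union. The paper compresses the index-set matching into a single sentence (``the result then comes from Lemma \ref{lemme-sandwich} and Theorem \ref{Kebab}''), whereas you correctly spell out the needed chain --- part (1) twice, the identity $(\mathcal{T}/\mathcal{T}'')/(\mathcal{T}'/\mathcal{T}'')=\mathcal{T}/\mathcal{T}'$, then part (2) --- which is exactly the intended use of both clauses of that theorem.
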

	\begin{proof}
 Let $X$ be a finite set, for $(X, \mathcal{T}, \lhd) \in \mathbb{QT}_X$, we have
 $$(\Gamma \otimes \hbox{Id})\Gamma(X, \mathcal{T}, \lhd)=\sum \limits_{\underset{\mathcal{T}^{\prime \prime}, \mathcal{T}^{\prime} \hbox{ \tiny{is a Q-compatible}}}{\mathcal{T}^{\prime \prime}\soprec\mathcal{T}^{\prime}\soprec  \mathcal{T} }}(X, \mathcal{T}^{\prime \prime}, \lhd)\otimes (X, \mathcal{T}^{\prime}/ \mathcal{T}^{\prime \prime}, \lhd)\otimes (X, \mathcal{T}/ \mathcal{T}^{\prime}, \lhd).$$
 On the other hand, we have
 $$(\hbox{Id} \otimes \Gamma)\Gamma(X, \mathcal{T}, \lhd)=\sum \limits_{\underset{\mathcal{T}^{\prime \prime}, \mathcal{U} \hbox{ \tiny{are Q-compatible}}}{\mathcal{T}^{\prime \prime}\soprec  \mathcal{T},\, \mathcal{U}\soprec \mathcal{T}/ \mathcal{T}^{\prime \prime}  }}(X, \mathcal{T}^{\prime \prime}, \lhd)\otimes (X, \mathcal{U}, \lhd)\otimes (X, (\mathcal{T}/ \mathcal{T}^{\prime \prime})/\mathcal{U}, \lhd).$$
 The result then comes from Lemma \ref{lemme-sandwich} and Theorem \ref{Kebab}.
		Hence, $(\Gamma \otimes \hbox{Id})\Gamma =(\hbox{Id}\otimes \Gamma )\Gamma $, and consequently $\Gamma $ is coassociative. Finally we have directy:\\
		$$\Gamma \big( (X_1,\mathcal{T}_1, \lhd_1)(X_2, \mathcal{T}_2, \lhd_2)\big) =\Gamma(X_1,\mathcal{T}_1, \lhd_1)\Gamma(X_2, \mathcal{T}_2, \lhd_2).$$
 \end{proof}
\begin{examples}
For $(X, \lhd)=\begin{bmatrix}
a & a & a\\
c & b & b\\
b & c & c
\end{bmatrix}$ and $\mathcal{T}=\fcolorbox{white}{white}{
\scalebox{0.7}{
  \begin{picture}(54,67) (183,-149)
    \SetWidth{1.0}
    \SetColor{Black}
    \Vertex(194,-136){2}
    \Vertex(204,-136){2}
    \Text(191,-133)[lb]{\Large{\Black{$b$}}}
    \Text(202,-133)[lb]{\Large{\Black{$c$}}}
    \Arc(200,-132)(15.811,215,575)
    \Line(199,-116)(199,-105)
    \Vertex(199,-104){2}
    \Text(196,-100)[lb]{\Large{\Black{$a$}}}
  \end{picture}
}}$, then $(X, \mathcal{T}, \lhd)$ is a topological quandle and
$$\Gamma (\fcolorbox{white}{white}{
\scalebox{0.7}{
  \begin{picture}(54,67) (183,-149)
    \SetWidth{1.0}
    \SetColor{Black}
    \Vertex(194,-136){2}
    \Vertex(204,-136){2}
    \Text(191,-133)[lb]{\Large{\Black{$b$}}}
    \Text(202,-133)[lb]{\Large{\Black{$c$}}}
    \Arc(200,-132)(15.811,215,575)
    \Line(199,-116)(199,-105)
    \Vertex(199,-104){2}
    \Text(196,-100)[lb]{\Large{\Black{$a$}}}
  \end{picture}
}})=\fcolorbox{white}{white}{
\scalebox{0.7}{
  \begin{picture}(73,47) (183,-169)
    \SetWidth{1.0}
    \SetColor{Black}
    \Vertex(194,-154){2}
    \Vertex(204,-154){2}
    \Text(191,-150)[lb]{\Large{\Black{$b$}}}
    \Text(202,-150)[lb]{\Large{\Black{$c$}}}
    \Arc(200,-152)(15.811,215,575)
    \Vertex(223,-154){2}
    \Text(221,-150)[lb]{\Large{\Black{$a$}}}
  \end{picture}
}}\otimes \fcolorbox{white}{white}{
\scalebox{0.7}{
  \begin{picture}(54,67) (183,-149)
    \SetWidth{1.0}
    \SetColor{Black}
    \Vertex(194,-136){2}
    \Vertex(204,-136){2}
    \Text(191,-133)[lb]{\Large{\Black{$b$}}}
    \Text(202,-133)[lb]{\Large{\Black{$c$}}}
    \Arc(200,-132)(15.811,215,575)
    \Line(199,-116)(199,-105)
    \Vertex(199,-104){2}
    \Text(196,-100)[lb]{\Large{\Black{$a$}}}
  \end{picture}
}}+ \fcolorbox{white}{white}{
\scalebox{0.7}{
  \begin{picture}(54,67) (183,-149)
    \SetWidth{1.0}
    \SetColor{Black}
    \Vertex(194,-136){2}
    \Vertex(204,-136){2}
    \Text(191,-133)[lb]{\Large{\Black{$b$}}}
    \Text(202,-133)[lb]{\Large{\Black{$c$}}}
    \Arc(200,-132)(15.811,215,575)
    \Line(199,-116)(199,-105)
    \Vertex(199,-104){2}
    \Text(196,-100)[lb]{\Large{\Black{$a$}}}
  \end{picture}
}}\otimes \fcolorbox{white}{white}{
\scalebox{0.7}{
  \begin{picture}(65,49) (172,-166)
    \SetWidth{1.0}
    \SetColor{Black}
    \Arc(195,-143)(21.633,56,416)
    \Vertex(183,-151){2}
    \Vertex(194,-151){2}
    \Vertex(204,-152){2}
    \Text(180,-147)[lb]{\Large{\Black{$a$}}}
    \Text(191,-147)[lb]{\Large{\Black{$b$}}}
    \Text(202,-147)[lb]{\Large{\Black{$c$}}}
  \end{picture}
}}.$$
For $(X, \lhd)=\begin{bmatrix}
a & a & a & a\\
b & b & b & c\\
c & c & c & b\\
d & d & d & d
\end{bmatrix},$ and $\mathcal{T}=\mathcal{T}=\fcolorbox{white}{white}{
\scalebox{0.7}{
  \begin{picture}(56,37) (150,-205)
    \SetWidth{1.0}
    \SetColor{Black}
    \Vertex(172,-203){2}
    \Vertex(160,-203){2}
    \Vertex(153,-191){2}
    \Vertex(167,-191){2}
    \Line(160,-203)(154,-192)
    \Line(160,-204)(167,-191)
    \Text(150,-207)[lb]{\Large{\Black{$a$}}}
    \Text(177,-207)[lb]{\Large{\Black{$d$}}}
    \Text(147,-189)[lb]{\Large{\Black{$b$}}}
    \Text(171,-191)[lb]{\Large{\Black{$c$}}}
  \end{picture}
}}$
then $(X, \mathcal{T}, \lhd)$ is a topological quandle and
\begin{align*}
    \Gamma(\fcolorbox{white}{white}{
\scalebox{0.7}{
  \begin{picture}(56,37) (150,-205)
    \SetWidth{1.0}
    \SetColor{Black}
    \Vertex(172,-203){2}
    \Vertex(160,-203){2}
    \Vertex(153,-191){2}
    \Vertex(167,-191){2}
    \Line(160,-203)(154,-192)
    \Line(160,-204)(167,-191)
    \Text(150,-207)[lb]{\Large{\Black{$a$}}}
    \Text(177,-207)[lb]{\Large{\Black{$d$}}}
    \Text(147,-189)[lb]{\Large{\Black{$b$}}}
    \Text(171,-191)[lb]{\Large{\Black{$c$}}}
  \end{picture}
}})=\fcolorbox{white}{white}{
\scalebox{0.7}{
  \begin{picture}(56,37) (150,-205)
    \SetWidth{1.0}
    \SetColor{Black}
    \Vertex(172,-203){2}
    \Vertex(160,-203){2}
    \Vertex(153,-191){2}
    \Vertex(167,-191){2}
    \Line(160,-203)(154,-192)
    \Line(160,-204)(167,-191)
    \Text(150,-207)[lb]{\Large{\Black{$a$}}}
    \Text(177,-207)[lb]{\Large{\Black{$d$}}}
    \Text(147,-189)[lb]{\Large{\Black{$b$}}}
    \Text(171,-191)[lb]{\Large{\Black{$c$}}}
  \end{picture}
}}\otimes \fcolorbox{white}{white}{
\scalebox{0.7}{
  \begin{picture}(61,35) (106,-198)
    \SetWidth{1.0}
    \SetColor{Black}
    \Arc(128,-184)(13.038,122,482)
    \Vertex(122,-189){2}
    \Vertex(128,-189){2}
    \Vertex(134,-189){2}
    \Vertex(109,-189){2}
    \Text(105,-185)[lb]{\Large{\Black{$d$}}}
    \Text(118,-185)[lb]{\Large{\Black{$b$}}}
    \Text(126,-185)[lb]{\Large{\Black{$c$}}}
    \Text(132,-185)[lb]{\Large{\Black{$a$}}}
  \end{picture}
}}+ \fcolorbox{white}{white}{
\scalebox{0.7}{
  \begin{picture}(48,37) (156,-206)
    \SetWidth{1.0}
    \SetColor{Black}
    \Vertex(160,-202){2}
    \Vertex(160,-192){2}
    \Vertex(166,-202){2}
    \Vertex(174,-202){2}
    \Text(150,-204)[lb]{\Large{\Black{$a$}}}
    \Text(151,-190)[lb]{\Large{\Black{$b$}}}
    \Text(167,-200)[lb]{\Large{\Black{$c$}}}
    \Text(177,-200)[lb]{\Large{\Black{$d$}}}
    \Line(161,-202)(160,-193)
  \end{picture}
}}\otimes \fcolorbox{white}{white}{
\scalebox{0.7}{
  \begin{picture}(54,67) (183,-149)
    \SetWidth{1.0}
    \SetColor{Black}
    \Vertex(194,-136){2}
    \Vertex(204,-136){2}
    \Vertex(214,-136){2}
    \Text(191,-133)[lb]{\Large{\Black{$a$}}}
    \Text(202,-133)[lb]{\Large{\Black{$b$}}}
    \Text(212,-133)[lb]{\Large{\Black{$d$}}}
    \Arc(200,-132)(15.811,215,575)
    \Line(199,-116)(199,-105)
    \Vertex(199,-104){2}
    \Text(196,-100)[lb]{\Large{\Black{$c$}}}
  \end{picture}
}}+ \fcolorbox{white}{white}{
\scalebox{0.7}{
  \begin{picture}(48,37) (156,-206)
    \SetWidth{1.0}
    \SetColor{Black}
    \Vertex(160,-202){2}
    \Vertex(160,-192){2}
    \Vertex(166,-202){2}
    \Vertex(174,-202){2}
    \Text(150,-204)[lb]{\Large{\Black{$a$}}}
    \Text(151,-190)[lb]{\Large{\Black{$c$}}}
    \Text(167,-200)[lb]{\Large{\Black{$b$}}}
    \Text(177,-200)[lb]{\Large{\Black{$d$}}}
    \Line(161,-202)(160,-193)
  \end{picture}
}}\otimes \fcolorbox{white}{white}{
\scalebox{0.7}{
  \begin{picture}(54,67) (183,-149)
    \SetWidth{1.0}
    \SetColor{Black}
    \Vertex(194,-136){2}
    \Vertex(204,-136){2}
    \Vertex(214,-136){2}
    \Text(191,-133)[lb]{\Large{\Black{$a$}}}
    \Text(202,-133)[lb]{\Large{\Black{$c$}}}
    \Text(212,-133)[lb]{\Large{\Black{$d$}}}
    \Arc(200,-132)(15.811,215,575)
    \Line(199,-116)(199,-105)
    \Vertex(199,-104){2}
    \Text(196,-100)[lb]{\Large{\Black{$b$}}}
  \end{picture}
}}\\
&\hspace{-7cm} +\fcolorbox{white}{white}{
\scalebox{0.7}{
  \begin{picture}(58,39) (179,-166)
    \SetWidth{1.0}
    \SetColor{Black}
    \Vertex(183,-161){2}
    \Vertex(194,-161){2}
    \Vertex(204,-161){2}
    \Vertex(214,-161){2}
    \Text(180,-156)[lb]{\Large{\Black{$a$}}}
    \Text(191,-156)[lb]{\Large{\Black{$b$}}}
    \Text(202,-156)[lb]{\Large{\Black{$c$}}}
    \Text(212,-156)[lb]{\Large{\Black{$d$}}}
  \end{picture}
}}\otimes  \fcolorbox{white}{white}{
\scalebox{0.7}{
  \begin{picture}(56,37) (150,-205)
    \SetWidth{1.0}
    \SetColor{Black}
    \Vertex(172,-203){2}
    \Vertex(160,-203){2}
    \Vertex(153,-191){2}
    \Vertex(167,-191){2}
    \Line(160,-203)(154,-192)
    \Line(160,-204)(167,-191)
    \Text(150,-207)[lb]{\Large{\Black{$a$}}}
    \Text(177,-207)[lb]{\Large{\Black{$d$}}}
    \Text(147,-189)[lb]{\Large{\Black{$b$}}}
    \Text(171,-191)[lb]{\Large{\Black{$c$}}}
  \end{picture}
}}.
\end{align*}
\end{examples}

	\begin{theorem}\label{cointeraction}
	For any finite set $X$, let
	 $\xi : \mathbb{QT}_X \otimes (\mathbb{QT}\otimes \mathbb{QT})_X \longrightarrow \mathbb{QT}_X \otimes (\mathbb{QT}\otimes \mathbb{QT})_X$ be the map defined by:
	 \begin{equation}\label{eq:xi}
	 \xi\big((X, \mathcal{T}, \lhd)\otimes (Y, \mathcal{T}_1, \lhd_1)\otimes (X\backslash Y, \mathcal{T}_2, \lhd_2)  \big)=(X, \mathcal{T}, \widetilde{\lhd} )\otimes (Y, \mathcal{T}_1,\lhd_1)\otimes (X\backslash Y, \mathcal{T}_2, \lhd_2)
	 \end{equation}
	 where 
 \begin{itemize}
\item $a\widetilde{\lhd} b=a\lhd b$, \hbox{ for all } $a, b\in Y$,
\item $a\widetilde{\lhd} b=a\lhd^{X, Y} b$, \hbox{ for all } $a, b\in X\backslash Y$,
\item $a\widetilde{\lhd} b=b$, \hbox{ for all } $a\in Y, b\in X\backslash Y$,
\item $a\widetilde{\lhd} b=a$, \hbox{ for all } $a\in X\backslash Y, b\in Y$.
\end{itemize}
	The following diagram is commutative:
$$
\xymatrix{
\mathbb{QT}_X \ar[rr]^\Gamma \ar[d]_{\Delta} && \mathbb{QT}_X \otimes \mathbb{QT}_X \ar[d]^{Id \otimes \Delta}\\
	(\mathbb{QT}\otimes\mathbb{QT})_X \ar[d]_{\Gamma \otimes \Gamma } && \mathbb{QT}_X \otimes (\mathbb{QT} \otimes \mathbb{QT})_X \ar[d]^{\xi}\\
\bigoplus \limits_{Y\subset X}\mathbb{QT}_Y \otimes \mathbb{QT}_Y \otimes \mathbb{QT}_{X\backslash Y} \otimes \mathbb{QT}_{X\backslash Y} \ar[rr]_{ m^{1,3} } && \mathbb{QT}_X \otimes (\mathbb{QT} \otimes \mathbb{QT})_X
}
$$
i.e.,
$$\xi \circ(\hbox{\rm Id}\otimes \Delta )\circ \Gamma =m^{1,3} \circ(\Gamma \otimes \Gamma )\circ \Delta.$$
\end{theorem}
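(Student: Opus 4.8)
The plan is to evaluate both composites on a generator $(X,\mathcal{T},\lhd)$ and match the two resulting sums term by term. On the left, $\Gamma$ produces $\sum_{\mathcal{T}'}(X,\mathcal{T}',\lhd)\ot(X,\mathcal{T}/\mathcal{T}',\lhd)$; then $\mathrm{Id}\ot\Delta$ splits the second tensorand along the open subquandles $Y$ of $(X,\mathcal{T}/\mathcal{T}',\lhd)$, and finally $\xi$ overwrites the operation on the first tensorand by $\widetilde{\lhd}$. On the right, $\Delta$ first splits $(X,\mathcal{T},\lhd)$ along the open subquandles $Y$, then $\Gamma\ot\Gamma$ is applied to the two pieces, and $m^{1,3}$ recombines the two finer pieces into one object on $X$. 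Both composites land in $\mathbb{QT}_X\ot(\mathbb{QT}\ot\mathbb{QT})_X$, with the three tensorands supported on $X$, on $Y$ and on $X\backslash Y$; the task is to match, for each pair (refinement, subquandle), the topology and the quandle carried by each of the three slots.

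For the topological part I would set up the index bijection $(\mathcal{T}',Y)\longleftrightarrow(Y,\mathcal{T}'_{|Y},\mathcal{T}'_{|X\backslash Y})$, where on the left $\mathcal{T}'\soprec\mathcal{T}$ is $Q$-compatible and $Y$ is open in $\mathcal{T}/\mathcal{T}'$, while on the right $Y$ is open in $\mathcal{T}$ and $\mathcal{T}'_{|Y}\soprec\mathcal{T}_{|Y}$, $\mathcal{T}'_{|X\backslash Y}\soprec\mathcal{T}_{|X\backslash Y}$. The crux, and the step I expect to be the main obstacle, is the following clopen-block observation: if $Y$ is open in $\mathcal{T}/\mathcal{T}'$ then it is open in $\mathcal{T}$ and \emph{clopen} in $\mathcal{T}'$. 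Indeed $\le_{\mathcal{T}/\mathcal{T}'}$ is the transitive closure of $\le_{\mathcal{T}}\cup\ge_{\mathcal{T}'}$, so an up-set for it is at once an up-set for $\le_{\mathcal{T}}$ and a down-set for $\le_{\mathcal{T}'}$; combined with $\le_{\mathcal{T}'}\subseteq\le_{\mathcal{T}}$ this makes $Y$ both an up-set and a down-set for $\mathcal{T}'$, i.e. a union of $\mathcal{T}'$-connected components. Hence $\mathcal{T}'=\mathcal{T}'_{|Y}\cdot\mathcal{T}'_{|X\backslash Y}$ is exactly the disjoint-union topology that $m^{1,3}$ places on the first slot, so the first tensorands carry the same topology. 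The absence of cross-relations in $\mathcal{T}'$ then lets one restrict the defining relation of the quotient block by block, yielding $(\mathcal{T}/\mathcal{T}')_{|Y}=\mathcal{T}_{|Y}/\mathcal{T}'_{|Y}$ and $(\mathcal{T}/\mathcal{T}')_{|X\backslash Y}=\mathcal{T}_{|X\backslash Y}/\mathcal{T}'_{|X\backslash Y}$, matching the second and third slots. That $\soprec$ and $Q$-compatibility pass to restrictions, and that the correspondence is a genuine bijection, follow from Lemma \ref{lemme-sandwich} and Theorem \ref{Kebab} together with \cite{acg10}.

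It remains to match the quandle operations slot by slot, which is precisely the role of $\xi$. On the second slot both sides carry $\lhd$ restricted to $Y$, since $\Gamma$ never alters the operation; on the third slot both carry $\lhd^{X,Y}$, and these agree because, by the definition preceding Proposition \ref{compl-quandle}, $\lhd^{X,Y}$ depends only on $\lhd$ and on $Y$, not on the ambient topology, so it is the same whether computed inside $\mathcal{T}/\mathcal{T}'$ or inside $\mathcal{T}$. On the first slot, however, the left-hand side still carries the original $\lhd$ after $\mathrm{Id}\ot\Delta$, whereas $m^{1,3}$ endows the right-hand side with the product operation $\widetilde{\lhd}$, equal to $\lhd$ on $Y$, to $\lhd^{X,Y}$ on $X\backslash Y$, and trivial across the two blocks. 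These genuinely differ — the original $\lhd$ has nontrivial cross-action between $Y$ and $X\backslash Y$ — and the sole effect of $\xi$ is to substitute this $\widetilde{\lhd}$ for $\lhd$. As the four defining clauses of $\widetilde{\lhd}$ in the statement are verbatim those of the product $m$, the first tensorands coincide after $\xi$, and the diagram commutes. The entire difficulty is thus concentrated in the clopen-block lemma: once it forces the first-slot topologies to agree, the remaining quandle discrepancy is exactly the term repaired by $\xi$.
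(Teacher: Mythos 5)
Your computation of the two composites agrees with the paper's, and your ``clopen-block observation'' is correct as a statement about finite topologies: if $Y$ is open for $\mathcal{T}/\mathcal{T}'$ then it is open for $\mathcal{T}$ and a union of $\mathcal{T}'$-connected components, whence $\mathcal{T}'=\mathcal{T}'_{|Y}\mathcal{T}'_{|X\backslash Y}$ and the quotient restricts block by block. The gap is that this observation has no hypothesis to act on here: the coproduct $\Delta$ of this paper is indexed by \emph{all} subquandles $Y$ of $(X,\lhd)$, with no topological condition whatsoever --- you have silently replaced ``subquandle'' by ``open subquandle'', importing the indexing of the Fauvet--Foissy--Manchon coproduct on finite topologies. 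With the paper's definition the matching of first tensorands genuinely fails. Take $X=\{a,b\}$ with the trivial quandle $x\lhd y=x$ and $\mathcal{T}$ the connected topology generated by $a\le_{\mathcal{T}}b$; then $\mathcal{T}\soprec\mathcal{T}$, and $Y=\{a\}$ is a subquandle, so the left-hand side contains the term $(X,\mathcal{T},\widetilde{\lhd})\otimes(\{a\},\cdot,\cdot)\otimes(\{b\},\cdot,\cdot)$, whereas on the right-hand side every term with $Y=\{a\}$ has first slot $m\big((\{a\},\cdot,\cdot)\otimes(\{b\},\cdot,\cdot)\big)$, whose topology is the discrete (disjoint-union) one. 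A connected topology can never equal a disjoint union $\mathcal{T}_1\mathcal{T}_2$, so the two sums differ and the pivotal step of your argument does not apply.

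To be fair, the paper's own proof stumbles at exactly the same spot: after writing out both sums it only establishes the correspondence $\mathcal{T}'\longleftrightarrow(\mathcal{T}'_1,\mathcal{T}'_2)$ for topologies refining one that is \emph{already} of the form $\mathcal{T}_1\mathcal{T}_2$, and never explains why an arbitrary $\mathcal{T}'\soprec\mathcal{T}$ should split along a given subquandle $Y$. Your openness requirement is precisely the hypothesis that would make the index bijection (and the statement) correct, so your proposal is best read as a proof of a corrected theorem in which $\Delta$ runs over open subquandles; as a blind proof of the theorem as literally stated, it has a genuine gap at its self-declared crux.
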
 
\begin{proof}
Let $X$ be a finite set and $(X, \mathcal{T}, \lhd)\in \mathbb{TQ}_X$, we have 
\begin{align*}
\xi \circ(\hbox{\rm Id}\otimes \Delta )\circ \Gamma(X, \mathcal{T}, \lhd)&=\xi \circ(\hbox{\rm Id}\otimes \Delta )\left(\sum \limits_{\underset{\mathcal{T}^{\prime} \hbox{ \tiny{is Q-compatible}}}{\mathcal{T}^{\prime}\soprec  \mathcal{T} }}(X, \mathcal{T}^{\prime}, \lhd)\otimes (X, \mathcal{T}/ \mathcal{T}^{\prime}, \lhd)\right)\\
&=\xi\left( \sum \limits_{\underset{\hbox{ \tiny{ Y subquandle of X}}}{\mathcal{T}^{\prime }\soprec \mathcal{T},\, \mathcal{T}^{\prime} \hbox{ \tiny{ Q-compatible}}  }}(X, \mathcal{T}^{\prime}, \lhd)\otimes (Y, (\mathcal{T}/ \mathcal{T}^{\prime})_{|Y}, \lhd)\otimes (X\backslash Y, (\mathcal{T}/ \mathcal{T}^{\prime})_{|X\backslash Y}, \lhd^{X, Y}) \right)\\
&=\sum \limits_{\underset{\hbox{ \tiny{ Y subquandle of X}}}{\mathcal{T}^{\prime }\soprec \mathcal{T},\, \mathcal{T}^{\prime} \hbox{ \tiny{is a Q-compatible}}  }}(X, \mathcal{T}^{\prime}, \widetilde{\lhd})\otimes (Y, (\mathcal{T}/ \mathcal{T}^{\prime})_{|Y}, \lhd)\otimes (X\backslash Y, (\mathcal{T}/ \mathcal{T}^{\prime})_{|X\backslash Y}, \lhd^{X, Y}).
\end{align*}
On the other hand,
\begin{align*}
m^{1,3} \circ(\Gamma \otimes \Gamma )\circ \Delta (X, \mathcal{T}, \lhd)&=m^{1,3} \circ(\Gamma \otimes \Gamma )\left(\sum \limits_{Y \hbox{\tiny{ subquandle of X }} }(Y, \mathcal{T}_{|Y}, \lhd)\otimes (X\backslash Y, \mathcal{T}_{X\backslash Y}, \lhd^{X, Y})\right)\\
&\hspace{-3.3cm}=m^{1,3}\left(\sum \limits_{\underset{\mathcal{T}_1 \hbox{ \tiny{is a } } Q_1\hbox{\tiny{-com}}, \,\mathcal{T}_2 \hbox{ \tiny{is } } Q_2\hbox{\tiny{-comp.}}}{Y\hbox{\tiny{ subquandle of }} X,\, \mathcal{T}_1\soprec \mathcal{T}_{|Y},\, \mathcal{T}_2\soprec \mathcal{T}_{|X\backslash Y}  }}(Y, \mathcal{T}_1, \lhd)\otimes (Y, \mathcal{T}_{|Y}/\mathcal{T}_1, \lhd)\otimes (X\backslash Y, \mathcal{T}_2, \lhd^{X,Y} )\otimes (X\backslash Y, \mathcal{T}_{X\backslash Y}/\mathcal{T}_2, \lhd^{X,Y})
\right)\\
&=\hspace{-3.3cm}=\sum \limits_{\underset{\mathcal{T}_1 \hbox{ \tiny{is a } } Q_1\hbox{\tiny{-com}}, \,\mathcal{T}_2 \hbox{ \tiny{is } } Q_2\hbox{\tiny{-comp.}}}{Y\hbox{\tiny{ subquandle of }} X,\, \mathcal{T}_1\soprec \mathcal{T}_{|Y},\, \mathcal{T}_2\soprec \mathcal{T}_{|X\backslash Y}  }}(X, \mathcal{T}_1\mathcal{T}_2, \widetilde{\lhd})\otimes (Y, \mathcal{T}_{|Y}/\mathcal{T}_1, \lhd)\otimes (X\backslash Y, \mathcal{T}_{X\backslash Y}/\mathcal{T}_2, \lhd^{X,Y})\\
&=\hspace{-3.3cm}=\sum \limits_{\underset{\mathcal{T}_1 \hbox{ \tiny{is } } Q_1\hbox{\tiny{-comp.}}, \,\mathcal{T}_2 \hbox{ \tiny{is } } Q_2\hbox{\tiny{-comp.}}}{Y\hbox{\tiny{ subquandle of }} X,\, \mathcal{T}_1\soprec \mathcal{T}_{|Y},\, \mathcal{T}_2\soprec \mathcal{T}_{|X\backslash Y}  }}(X, \mathcal{T}_1\mathcal{T}_2, \widetilde{\lhd})\otimes (Y, (\mathcal{T}/\mathcal{T}_1\mathcal{T}_2)_{|Y}, \lhd)\otimes (X\backslash Y, (\mathcal{T}/\mathcal{T}_1\mathcal{T}_2)_{X\backslash Y}, \lhd^{X,Y}).
\end{align*}

Here, $\mathcal{T}_1$ and $\mathcal{T}_2$ are topologies on $X_1$ and $X_2$ respectively. Let $\mathcal{T}^{\prime}_1\oprec \mathcal{T}_1$ and $\mathcal{T}^{\prime}_2\oprec \mathcal{T}_2$. Then, we can see that $\mathcal{T}^{\prime}_1\mathcal{T}^{\prime}_2\oprec \mathcal{T}_1\mathcal{T}_2$. Conversely, for any topology $\mathcal{T}$ on the disjoint union $X_1\sqcup X_2$ such that $\mathcal{T}\oprec \mathcal{T}_1\mathcal{T}_2$, we can write $\mathcal{T}^{\prime}_1\mathcal{T}^{\prime}_2$ with $\mathcal{T}_i = \mathcal{T}_{|X_i}$ for $i=1, 2$, and we have $\mathcal{T}^{\prime}_i\oprec \mathcal{T}_i$. This proves the theorem.

\end{proof}

We therefore notice that $\Gamma$ and $\Delta$ are not compatible, i.e. we do not get a double twisted bialgebra. The map $\xi$ above precisely accounts for the defect.

	

\newpage

\end{document}